\newcommand{\be}{\begin{equation}}
\newcommand{\ee}{\end{equation}}
\newcommand{\bew}{\begin{equation*}} % equation without numbering
\newcommand{\eew}{\end{equation*}}
\newcommand{\eps}{\varepsilon}
\newcommand{\R}{\mathbb R}
\newcommand{\T}{\mathbb T}
\newcommand{\Z}{\mathbb Z}
\newcommand{\p}{\mathrm{per}}
\newcommand{\e}{\mathrm e}
\newcommand{\I}{\mathrm{i}}
\newtheorem{theorem}{Theorem}[section]
\newtheorem{lemma}[theorem]{Lemma}
\theoremstyle{definition}
\newtheorem{remark}[theorem]{Remark}
\numberwithin{equation}{section}
\begin{document}

\title[Well-posedness of a highly nonlinear shallow water equation]{Well-posedness of a highly nonlinear\\shallow water equation on the circle}

%----------Authors
\author[N.~Duruk Mutlubas]{Nilay Duruk Mutlubas}
\address{Faculty of Engineering and Natural Sciences, Sabanci University, Turkey}
\email{\href{mailto:nilay.duruk@sabanciuniv.edu}{nilay.duruk@sabanciuniv.edu}}

\author[A.~Geyer]{Anna Geyer}
\address{Delft Institute of Applied Mathematics, Faculty Electrical Engineering, Mathematics and Computer Science, Delft University of Technology, The Netherlands.}
\email{\href{mailto:a.geyer@tudelft.nl}{a.geyer@tudelft.nl}}

\author[R.~Quirchmayr]{Ronald Quirchmayr}
\address{Department of Mathematics, KTH Royal Institute of Technology, Sweden.}
\email{\href{mailto:ronaldq@kth.se}{ronaldq@kth.se}}

\begin{abstract}
We present a comprehensive introduction and overview of a recently derived model equation for waves of large amplitude in the context of shallow water waves and provide a literature review of all the available studies on this equation.  
Furthermore, we establish a novel result concerning the local well-posedness of the corresponding Cauchy problem for space-periodic solutions with initial data from the Sobolev space $H^s$ on the circle for $s>3/2$.

\vspace{1em}
\noindent
{\bfseries Mathematics Subject Classification}: Primary: 35Q35. Secondary: 35L30.\\
{\bfseries Keywords}: Shallow water equation, large amplitude waves, cubic nonlinearity, well-posedness.
\end{abstract}

%%% ----------------------------------------------------------------------
\maketitle
%%% ----------------------------------------------------------------------

\section{Introduction and overview}
\noindent 
This paper addresses the local well-posedness of the model equation
\begin{align}
\begin{aligned} \label{maineq}
u_{t} &+ u_x  + \frac{3 \eps}{2} u u_{x} - \frac{\delta^2}{18} (4 u_{x x x} 
+ 7   u_{x x t}) \\
&= \frac{ \eps \delta^2 }{6}\Big( 2 u_{x} u_{x x} +  u u_{x x x} \Big) 
- \frac{\eps^2 \delta^2}{96} \Big( 398 u u_{x} u_{x x} + 45 u^2 u_{x x x} + 154 u^3_{x}  \Big)
\end{aligned}
\end{align}
considered on the unit circle with initial data from the Sobolev space $H^s$ for $s>3/2$.   
Equation \eqref{maineq} was derived in~\cite{Quirchmayr2016} from the classical water wave problem for free surface gravity water waves over a flat bottom, where the underlying incompressible flow is governed by Euler's equation. It serves as an asymptotic model for the horizontal velocity component of a unidirectional shallow water wave of large amplitude at a specific depth. 

Scalar model equations are effective tools for studying the propagation of unidirectional water waves. In contrast to general solutions for the full governing equations, solutions of model equations are analytically (and numerically) tractable and provide accurate approximations for certain classes of waves, enabling an in-depth analysis of complex nonlinear wave phenomena.

One of the most prominent equations in the class of \emph{shallow water wave models} is the Korteweg-de~Vries equation (KdV) 
\bew
u_t + u_x + \frac{2 \eps}{3}u u_x + \frac{\delta^2}{6}u_{x x x} = 0;
\eew
here $\eps$ and $\delta$ are the dimensionless \emph{amplitude} and \emph{shallowness} parameter, see \cite{CJ1, Johnson, Johns1}.
KdV was first considered by Boussinesq~\cite{Bouss1877} in 1877, before Korteweg and de~Vries~\cite{KdV1895} derived it in 1895.
It exhibits a balance between nonlinear and dispersive effects enabling the formation of solitary waves, which where first observed in a narrow canal by Russell~\cite{Russell1839}. Such waves preserve their shape while propagating at constant speed and can not be described by the linear dispersive water wave equation (formally obtained by setting $\eps=0$ in the above equation), whose solutions decrease in height and disperse as they propagate.
KdV is an accurate approximation\footnote{We refer to~\cite{Lannes} for a comprehensive approach on the justification of KdV as an accurate shallow water approximation for the water wave problem.} for \emph{shallow water waves of small amplitude}, i.e.~$\delta\ll1$ and $\eps=\mathcal O(\delta^2)$, which propagate in one direction. It forms a completely integrable bi-Hamiltonian system, and the existence of a Lax pair enables the application of nonlinear Fourier transform techniques to solve the Cauchy problem exactly for large classes of initial data and to determine the long-time behavior of the solution. Moreover, there exists a single global canonical coordinate system for space-periodic solutions (so-called Birkhoff coordinates) in which the time evolution becomes linear \cite{KappelerPoeschel03}.
KdV is the simplest water wave model whose solitary waves are solitons (see the discussion in \cite{DJ}), i.e.~these localized waves preserve their shape after interacting with other waves of this type in conjunction with a phase shift. 
KdV has global solutions  in time for a very large class of initial data, cf.~\cite{T}, covering all physically relevant initial states (see the discussion in \cite{Constantin}). In particular KdV is not capable of describing \emph{breaking waves}---waves that remain bounded whereas their slope becomes infinite in finite time.
KdV belongs to a wider class of weakly nonlinear dispersive equations, the BBM equations~\cite{BBM}
\bew
u_t + u_x + \frac{2 \eps}{3}u u_x 
+ \delta^2 ( \alpha u_{x x x} + \beta  u_{x x t}  ) = 0,
\eew
with $\beta \leq 0$ (to avoid linear ill-posedness) and $\alpha = \frac{1}{6} +\beta$. Since this one-parameter family of equations does not account correctly for waves whose behavior is more nonlinear than dispersive, one is forced to consider more general regimes to include breaking waves, see~\cite{Constantin}.

A natural regime for describing larger waves is the shallow water regime for waves of \emph{moderate amplitude}, i.e.~$\delta \ll 1$ and $\eps=\mathcal O(\delta)$. 
It contains the following two-parameter family of equations, which accurately approximate the water wave problem (cf.~\cite{CL,Lannes}):
\bew 
u_t + u_x + \frac{3\eps}{2} u u_x + \delta^2 (\alpha u_{x x x} + \beta u_{x x t}) = \eps \delta^2 (\gamma u u_{x x x}  + \zeta u_x u_{x x}).
\eew
The function $u=u|_{z_0}$ describes the evolution of the horizontal velocity component of the flow field at a fixed dimensionless depth  $z_0 \in [0,1]$; the second parameter $p\in \mathbb R$ can be chosen such that the coefficients $\alpha, \beta, \gamma, \zeta$ satisfy 
\bew
\alpha = p+\lambda, \quad \beta = p - \frac{1}{6} + \lambda, \quad \gamma = -\frac{3}{2} p - \frac{1}{6} - \frac {3}{2} \lambda,
\quad \zeta = - \frac{9}{2} p - \frac{23}{24} - \frac{3}{2} \lambda,
\eew
where $\lambda = \frac{1}{2}(z^2_0 - \frac{1}{3})$.
It turns out that this family contains models that capture wave breaking. The most prominent examples are the Camassa-Holm equation (CH) 
\bew 
U_t + \kappa U_x + 3 U U_x - U_{x x t} = 2 U_x U_{x x} + U U_{x x x},  
\eew
and the Degasperis-Procesi equation (DP)
\bew 
U_t + \hat \kappa U_x + 4 U U_x - U_{x x t} = 3 U_x U_{x x} + U U_{x x x}.
\eew
By means of a suitable scaling of $U$ (provided that $\kappa,\hat\kappa\neq0$) one can recover that both CH and DP belong to the before mentioned two-parameter family, cf.~\cite{Johns1, CL}; they generalize KdV/BBM in the sense that they degenerate to KdV/BBM when $\eps=\mathcal O(\delta^2)$. 
CH was derived in~\cite{CH} as an integrable shallow water wave model with peaked solitons; it was earlier derived in \cite{FF} as a novel bi-Hamiltonian equation. DP was first considered in \cite{DP}. 
CH and DP share with KdV the rare property that they form completely integrable bi-Hamiltonian systems---in fact they are the only integrable equations of the before mentioned family, cf.~\cite{I1,I2}. We refer to \cite{CMcK} and \cite{CIL} for rigorous approaches on integrability of CH and DP, respectively. The classes of solutions of CH and DP are richer than the waves described by KdV: apart from capturing the phenomenon of wave breaking \cite{CE1, CE2, CE3, ELY}, there is another fact that distinguishes CH and DP from KdV: they possess singular traveling wave solutions such as periodic and solitary waves with peaked and cusped crests, and their solitons are peaked \cite{Lenells2005a, Lenells2005b, CE1, DHH}.

The model equation for the free surface $\eta$ of moderate amplitude shallow water waves that corresponds to CH and DP---we denote it by SE---is given by
\begin{align*} 
\eta_t + \eta_x + \frac{3\eps}{2} \eta \eta_x  -\frac{3\eps^2}{8} &\eta^2 \eta_x 
+ \frac{\delta^2}{12} \eta_{x x x} - \frac{\delta^2}{12} \eta_{x x t} \\
&= - \frac{3\eps^3}{16} \eta^3 \eta_x   - \frac{7\eps \delta^2}{24} \Big( 2 \eta_x \eta_{x x} + \eta \eta_{x x x} \Big).
\end{align*}
It initially appeared in \cite{Johns1} in Johnson's re-derivation of CH directly from the governing equations, and was later studied in various papers. 
We refer to \cite{CL} for a rigorous justification of SE as an accurate shallow water approximation for the water wave problem (besides CH and DP), a local well-posedness result for initial data in $H^s$, $s>5/2$, as well as wave breaking results. 
An improvement of local well-posedness in terms of lower Sobolev exponents can be found in \cite{Mut2}; we refer to \cite{Mut} for an analogue well-posedness result on the periodic domain and wave breaking results for periodic solutions. 
Moreover, it was proved in \cite{DGM} that the data-to-solution map is not uniformly continuous.  
SE has smooth solitary traveling wave solutions \cite{AG}, which are orbitally stable \cite{DG}, as well as smooth periodic waves and a rich collection of singular traveling waves \cite{GG}. Moreover, symmetric solutions of this equations must be traveling waves \cite{AG2}, adding evidence in support of the strong correlation between symmetry and traveling waves often observed in water wave theory. 

Equation \eqref{maineq} was derived to account for even larger waves than those captured by CH and DP (and SE) by appealing to the approach presented in \cite{Johns1}, where Johnson re-derived CH directly from the governing equations for gravity water waves by means of double asymptotic expansions in the wave parameters $\delta$ and $\eps$ of the velocity field, the pressure and the free surface. This ansatz makes it possible to treat the amplitude and shallowness parameters independently over the course of the derivation. After determining sufficiently many terms in the asymptotic expansion, one may freely choose a specific wave regime to discover the corresponding asymptotic models. 
Equation \eqref{maineq} lives in the regime which is determined by the relation $\eps = \mathcal O(\sqrt{\delta})$ and $\delta \ll 1$. 
This means that the amplitude parameter can---unlike in the CH regime---exceed the size of the shallowness parameter, which brings in the additional highly nonlinear $\eps^2\delta^2$-terms when compared to CH and DP; for this reason we speak about a shallow water equation for waves of \emph{large amplitude}. 
Equation \eqref{maineq} belongs to a two parameter family of shallow water equations of large amplitude: one of the free parameters  determines the fixed depth of the water, whereas the second parameter stems from the 
``BBM-trick'' to include the $u_{x x t}$-term in \eqref{maineq}. 
The latter parameter was chosen such that the coefficients of the quadratic third order terms in \eqref{maineq} satisfy the ratio $2:1$ as in the case for CH, 
whereas the particular choice for the dimensionless water depth entails that the $\eps^2\delta^2$-term can be written as an $x$-derivative,  which structurally distinguishes \eqref{maineq} from all the other equations from the same family.

Equation \eqref{maineq}, as well as the entire two-parameter family describing the horizontal velocity component at any depth associated with \eqref{maineq}, and the corresponding one-parameter family of equations for the evolution of the free surface elevation are locally well-posed on the real line in the classical $H^3(\R)$-setting. This was shown in \cite{Quirchmayr2016} with the help of Kato's semigroup approach for quasilinear evolution equations (see \cite{Kato}), where also a blow-up criterion for solutions $u$ of \eqref{maineq} in $H^3(\R)$  was established by means of the blow-up of $\|u_x\|_{L^\infty}$. Since the appearance of this initial study, several papers by different authors addressed the local well-posedness of \eqref{maineq} in more general settings, refined estimates for blow-up, proved persistence of certain asymptotic properties of the solution; furthermore weak traveling waves were fully classified. We sketch this progress in the following.

In \cite{YangXu}, the authors exploited the aforementioned structural properties of \eqref{maineq} to reformulate it as a nonlocal first order equation to show local well-posedness of the corresponding Cauchy problem with initial data in $H^s(\R)$ for $s>3/2$ (where $3/2$ is the sharp lower bound) by applying Kato's theory. Furthermore, they proved an asymptotic persistence property for $H^s(\R)$ solutions---certain decay rates of the initial wave profile are preserved under the time evolution. Even though the proofs leave much to be desired in terms of mathematical rigor and completeness of the arguments, see Remark \ref{Rem_final}, these results can be considered to be correct. 

The paper~\cite{GQ18} classifies all (weak) traveling wave solutions of \eqref{maineq} in $H^1_{\mathrm{l o c}}(\R)$ by means of a planar dynamical system approach. In particular it is shown that, in addition to smooth and singular traveling waves  which are similar to those of CH and DP, equation \eqref{maineq} exhibits compactly supported traveling waves of depression with cusped or crested troughs and very peculiar waves having e.g.~both peaked crests and troughs; such wave forms do not appear among the traveling waves of CH or DP. 

In \cite{FanYan} the local well-posedness for solutions of \eqref{maineq} on the real line was improved to the Besov space setting $B^{3/2}_{2,1}(\R)$. The authors employed the earlier mentioned nonlocal form of \eqref{maineq}. In contrast to the previous well-posedness results they based their proof on an iterative scheme, which was implemented by virtue of Littlewood-Paley decompositions.   
Moreover, they showed that if a solution $u=u(x,t)$ blows up in $B^{3/2}_{2,1}(\R)$ with maximal life span $T$ then 
$\int^T_0 \| u_x(t) \|^2_{L^\infty(\R)} \, \mathrm d t=\infty$. Additionally they proved that certain real analytic initial data $u_0=u_0(x)$ pass on analyticity to the solution $u$ (with respect to both the space and time variable) on some local time interval. 

The paper \cite{Zhou} considers local well-posedness of \eqref{maineq} in the Besov space $B^s_{p,q} (\R)$, $1\leq p,q \leq \infty$ and $s > \max\{1+1/p,3/2\}$. In addition, the author presents a blow-up criterion for solutions in $H^s(\R)$, $s>3/2$, in terms of the blow up of $\|u_x(t)\|_{L^\infty(\R)}$ as $t$ approaches the maximal time of existence. Furthermore, asymptotic persistence estimates of solutions are established in $L^p_\phi$ spaces for certain weight functions $\phi$. 

\smallskip
The goal of this paper is to address the space-periodic Cauchy problem for \eqref{maineq}. Our main result establishes  local well-posedness for $1$-periodic solutions in  $H^s(\T)$, $s> 3/2$, cf.~Theorem \ref{wpresult}. This constitutes a new important aspect concerning the initial value problem of \eqref{maineq} which has not been considered in the literature so far: the local well-posedness results in \cite{FanYan, Quirchmayr2016, YangXu, Zhou} have in common that they are formulated for decaying solutions on the real line and hence do not include periodic solutions.
In particular the smooth periodic traveling wave solutions of \eqref{maineq}, which were identified in \cite{GQ18}, are captured by Theorem \ref{wpresult}. Hence this result provides a rigorous foundation for the study of perturbations of these traveling waves opening the path for a stability analysis of smooth periodic traveling waves of \eqref{maineq}. 
The proof of Theorem \ref{wpresult} relies---in contrast to the well-posedness results in~\cite{Quirchmayr2016}---on a reformulation of \eqref{maineq} as a nonlocal first order equation (like in \cite{FanYan, YangXu, Zhou}), which is possible due to the particular structure of the $\eps^2\delta^2$-coefficients; it makes use of Kato's semigroup approach.

 %%% ----------------------------------------------------------------------

\section{Local well-posedness result} \label{sec2}
\noindent 
We address the local well-posedness of the Cauchy problem for \eqref{maineq} on the unit circle. 
That is, for a given initial data $u_0\colon \R \to \R$ with $u_0(x)=u_0(x+1)$ for all $x\in\R$ we seek a unique solution $u$, which satisfies   
\be \label{CPmaineq}
\begin{cases} 
(\ref{maineq}) \; \text{is satisfied for} \qquad  &x\in \mathbb R, \, t>0 \\
u(0,x) = u_0(x)  &x\in \mathbb R \\
u(t,x) = u(t,x+1) &x\in \mathbb R, \, t>0
\end{cases}
\ee
and depends continuously on the initial data $u_0$.
By applying Kato's semigroup approach for quasilinear evolution equations~\cite{Kato} we show that \eqref{CPmaineq} is locally well-posed in the Sobolev space $H^s_\p=H^s(\T)$ for $s>3/2$ with the usual norm $\|\cdot\|_s$; $\T$ denotes the unit circle, i.e.~$\T=\R/\Z$. In particular, $L^2_\p = H^0_\p$, the $L^2$-norm coincides with $\|\cdot\|_0$ which is induced from the standard inner product $(\cdot,\cdot)_0$ in $L^2_\p$. In full detail our main result reads as follows.
\begin{theorem}\label{wpresult}
Let $s> 3/2$. For every $u_0 \in H^s_\p$ there is a maximal time of existence $T\in (0,\infty]$ and a unique solution $u \in \mathcal C ([0,T);H^s_\p) \cap \mathcal C^1 ((0,T);H^{s-1}_\p)$ of the Cauchy problem \eqref{CPmaineq}.

Moreover, the mapping which maps initial data $u_0$ to unique local solutions $u$ of \eqref{CPmaineq} is continuous as a mapping from a neighborhood of $u_0$ in  $H^s_\p$ to $\mathcal C ([0,\tilde T];H^s_\p)$, where $[0,\tilde T]$ denotes a sufficiently small but positive closed time interval.
\end{theorem}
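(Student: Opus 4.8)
The plan is to recast \eqref{maineq} as a nonlocal transport equation of the form $u_t+g(u)u_x=f(u)$ and then to verify the hypotheses of Kato's theorem for quasilinear evolution equations in the periodic Sobolev scale. First I would exploit the two structural features highlighted in the introduction. Writing $a=\tfrac{7\delta^2}{18}$ and $A=1-a\partial_x^2$, the time-derivative terms combine to $Au_t$, and $A$ is boundedly invertible on $\T$ since its symbol $1+ak^2\geq 1$ has no integer zero. On the right-hand side the $\eps\delta^2$- and $\eps^2\delta^2$-contributions are exact $x$-derivatives,
\begin{align*}
2u_xu_{xx}+uu_{xxx}&=\partial_x\!\left(uu_{xx}+\tfrac12 u_x^2\right),\\
398\,uu_xu_{xx}+45\,u^2u_{xxx}+154\,u_x^3&=\partial_x\!\left(45\,u^2u_{xx}+154\,uu_x^2\right),
\end{align*}
while the transport and dispersive terms are likewise total derivatives. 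Hence \eqref{maineq} takes the compact form $Au_t=\partial_x P(u)$, where $P(u)$ is the polynomial in $u,u_x,u_{xx}$ collected above; applying $A^{-1}$ gives $u_t=A^{-1}\partial_x P(u)$.

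The decisive step is to isolate from $A^{-1}\partial_x P(u)$ the genuinely quasilinear transport part. The only second-order terms in $P(u)$ are $c(u)u_{xx}$ with the $u$-dependent coefficient $c(u)=\tfrac{2\delta^2}{9}+\tfrac{\eps\delta^2}{6}u-\tfrac{45\eps^2\delta^2}{96}u^2$. A symbol computation shows that, for frozen $c$,
\[
A^{-1}\partial_x\!\left(c\,u_{xx}\right)+\tfrac{c}{a}\,u_x=\tfrac{c}{a}\,A^{-1}u_x,
\]
an operator that gains a derivative; the genuinely unbounded parts cancel. Setting $g(u):=c(u)/a$ (a fixed quadratic polynomial in $u$ with leading coefficient $\tfrac47$) and collecting the remaining terms — the order $\le 1$ contributions of $P$ together with the commutators $[A^{-1}\partial_x,c(u)]u_{xx}$ produced by the variable coefficient — into $f(u)$, I obtain
\[
u_t+g(u)u_x=f(u),\qquad u(0,\cdot)=u_0 .
\]
Here $f(u)=A^{-1}\partial_x[\,\cdots\,]$ is nonlocal of order $\leq 1$, and the cancellation guarantees $f\colon H^s_\p\to H^s_\p$ rather than merely into $H^{s-1}_\p$; the threshold $s>3/2$ enters precisely when estimating the quadratic terms $u_x^2$ and $u\,u_x^2$ via the algebra property of $H^{s-1}_\p$.

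To apply Kato's theorem I would take $X=H^{s-1}_\p$, $Y=H^s_\p$, the isomorphism $Q=\Lambda=(1-\partial_x^2)^{1/2}\colon Y\to X$, the operator $A(u)v=g(u)v_x$, and the source $f(u)$. The four hypotheses are then checked using the periodic analogues of the standard tools: $H^\sigma_\p$ is a Banach algebra for $\sigma>1/2$ and embeds in $C^1(\T)$ for $\sigma>3/2$, $A^{-1}\partial_x$ maps $H^\sigma_\p$ into $H^{\sigma+1}_\p$, and periodic Kato--Ponce commutator estimates hold. Concretely: (i) quasi-m-accretivity $A(u)\in G(H^{s-1}_\p,1,\beta)$ follows from an energy estimate whose constant is controlled by $\|\partial_x g(u)\|_{L^\infty}\leq C\|u\|_s^2$; (ii) $\Lambda A(u)\Lambda^{-1}-A(u)=[\Lambda,g(u)]\partial_x\Lambda^{-1}$ is bounded on $H^{s-1}_\p$ by a commutator estimate; (iii) $A(u)\in\mathcal L(H^s_\p,H^{s-1}_\p)$ and is Lipschitz in the $X$-norm by the algebra property; (iv) $f$ is bounded and Lipschitz on balls of $Y$. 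Invoking Kato's theorem and the standard maximal-time continuation argument then yields the unique solution $u\in \mathcal C([0,T);H^s_\p)\cap\mathcal C^1((0,T);H^{s-1}_\p)$ together with the asserted continuous dependence on $u_0$.

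I expect the genuine difficulty to lie not in the abstract machinery — by now standard for Camassa--Holm-type models — but in controlling the highly nonlinear $\eps^2\delta^2$-terms. Because $g(u)$ is quadratic rather than affine as for CH, and $P(u)$ contains the cubic product $u^2u_{xx}$, both the leading-order cancellation and the boundedness and Lipschitz estimates for $f$ involve products of three factors, and the accretivity estimate (i), where the threshold $s>3/2$ is sharp, must absorb a coefficient that itself depends nonlinearly on $u$. Verifying that every periodic commutator and algebra estimate survives uniformly on balls of $H^s_\p$ is therefore the technical heart of the argument.
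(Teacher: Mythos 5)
Your overall strategy coincides with the paper's: invert $1-\tfrac{7\delta^2}{18}\partial_x^2$, exploit that the right-hand side of \eqref{maineq} is an exact $x$-derivative, recast the equation as $u_t+g(u)u_x=f(u)$ with $g(u)$ quadratic in $u$ (this matches the paper's quasilinear part \eqref{maineq_abstract_quasipart}), and apply Kato's theorem with $X=H^{s-1}_\p$, $Y=H^s_\p$, $S=\Lambda$. However, two steps are gapped, and they sit at exactly the technically hardest points. The first concerns your definition of $f$: you place the variable-coefficient remainder $[A^{-1}\partial_x,M_{c(u)}]u_{xx}$ (equivalently, terms of the type $A^{-1}\big(c'(u)u_xu_{xx}\big)$) into $f(u)$. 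For the $Y$-mapping and $Y$-Lipschitz properties this is harmless, but Kato's hypotheses also require the $X$-Lipschitz bound $\|f(u)-f(v)\|_{s-1}\le C\|u-v\|_{s-1}$ on balls of $H^s_\p$. Differencing your commutator term produces products of the form $(u_x-v_x)\,u_{xx}$ in which \emph{both} factors are controlled only in $H^{s-2}$; for $3/2<s\le 5/2$ neither factor lies in $H^r$ with $r>1/2$, so the product estimate of Lemma~\ref{L_comm}~\eqref{com_est_3} is not applicable and the estimate breaks down on the claimed range of $s$. The cure is one more integration by parts, $c'(u)u_xu_{xx}=\tfrac12\partial_x\big(c'(u)u_x^2\big)-\tfrac12c''(u)u_x^3$, used in conjunction with the structural relation \eqref{rel_gamma}; this eliminates every occurrence of $u_{xx}$ and yields the paper's semilinear part \eqref{maineq_abstract_semipart}, namely $\Lambda^{-2}\partial_x$ applied to a polynomial in $u$ and $u_x$ alone. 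The difference estimate then only needs $\|u_x^2-v_x^2\|_{s-2}\lesssim\|u_x+v_x\|_{s-1}\|u_x-v_x\|_{s-2}$, which holds for all $s>3/2$ (cf.~Lemma~\ref{Lips}).

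The second gap is your item (i): you assert that $g(u)\partial_x\in G(H^{s-1}_\p,1,\beta)$ ``follows from an energy estimate.'' An energy estimate yields quasi-dissipativity, but generation of a $C_0$-semigroup additionally requires the range (m-accretivity) condition, and for a transport operator whose coefficient is merely of class $H^s$, $s>3/2$, this is not automatic. The paper's Lemma~\ref{lem_SG} is devoted to precisely this point: it identifies the natural domain $\{w\in L^2_\p:\ aw\in H^1_\p\}$, proves by a mollification/uniform-boundedness argument that $\mathcal C^\infty_\p$ is a core, shows that $a\partial_x+\tfrac12 M_{a_x}$ is skew-adjoint, and concludes via Stone's theorem and bounded perturbation; Lemma~\ref{lem_SG2} then transfers generation from $L^2_\p$ to $H^{s-1}_\p$ through the commutator estimate and Pazy's admissibility theorems, with the growth bound controlled by $\|a\|_s$ rather than only $\|\partial_xg(u)\|_{L^\infty}$. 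As written, your proposal omits this entire argument, which is the longest and most delicate part of the paper's proof.
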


 %%% ----------------------------------------------------------------------

\section{Proof of Theorem~\ref{wpresult}}
In the following we provide a suitable adaptation of Kato's general theory \cite{Kato} for our particular situation, which serves as the basis of our proof. 
Let us consider the abstract quasilinear initial value problem in a Hilbert space $X$: 
\be \label{abstractivp}
\begin{cases} 
y_t = A(y) y + f(y) \qquad t>0\\
 y(0) = y_0. 
\end{cases}
\ee
Let $Y$ be another Hilbert space which is continuously and densely embedded into $X$, and let $S\colon Y \to X$ be a topological isomorphism. We assume the following:
\begin{enumerate}[(I)]
\item \label{K1}
For any given $r>0$ it holds that for all $y \in  \mathrm B_r(0) \subseteq Y$ (the ball around the origin in $Y$ with radius $r$) the linear operator $A(y)\colon X \to X$ generates a strongly continuous semigroup $T_y(t)$ in $X$ which satisfies
\bew
\| T_y(t) \|_{\mathcal L(X)} \leq \mathrm e^{\omega_r t} \quad \text{for all} \quad t\in [0,\infty)
\eew
for a uniform constant $\omega_r > 0$.
\item \label{K2}
$A$ maps $Y$ into $\mathcal L(Y,X)$; more precisely the domain $D(A(y))$ contains $Y$, and the restriction $A(y)|_Y$ belongs to $\mathcal L(Y,X)$ for any $u\in Y$. Furthermore, $A$ is Lipschitz continuous in the sense that for all $r>0$ there exists a constant $C_1$ which only depends on $r$ such that
\bew
\| A(y) - A(z) \|_{\mathcal L(Y,X)} \leq C_1 \, \|y-z\|_X 
\eew
for all $y,z$ inside $\mathrm B_r(0) \subseteq Y$.
\item \label{K3}
For any $y\in Y$ there exists a bounded linear operator $B(y) \in \mathcal L(X)$ satisfying $B(y) = S A(y) S^{-1} - A(y)$, and $B \colon Y \to \mathcal L(X)$ is uniformly bounded on bounded sets in $Y$. Furthermore, for all $r>0$ there exists a constant $C_2$ which depends only on $r$ such that  
\bew
\| B(y) - B(z)\|_{\mathcal L(X)} \leq C_2 \, \|y-z\|_Y
\eew
for all $y,z \in \mathrm B_r(0)\subseteq Y$.
\item \label{K4}
The map $f\colon Y \to Y$ is locally $X$-Lipschitz continuous in the sense that for every $r>0$ there exists a constant $C_3>0$, depending only on $r$, such that
\bew
\| f(y) - f(z)\|_{X} \leq C_3 \, \|y - z\|_X \quad \text{for all} \; y,z \in \mathrm B_r(0) \subseteq Y,
\eew 
and locally $Y$-Lipschitz continuous in the sense that for every $r>0$ there exists a constant $C_4>0$, depending only on $r$, such that
\bew
\| f(y) - f(z)\|_{Y} \leq C_4 \, \|y - z\|_Y \quad \text{for all} \; y,z \in \mathrm B_r(0) \subseteq Y.
\eew 
\end{enumerate}

\begin{theorem}[Kato~\cite{Kato}] \label{katothm}
Let the assumptions \eqref{K1}--\eqref{K4} hold true. Then for any $y_0 \in Y$ there exists a maximal time of existence $T\in (0,\infty]$ and a unique solution $y \in \mathcal C ([0,T);Y) \cap \mathcal C^1 ((0,T);X)$ of the abstract Cauchy problem \eqref{abstractivp} in $X$.

Moreover, the solution depends continuously on the initial data, i.e.~the mapping which maps initial data $y_0$ to unique local solutions $y$ of \eqref{abstractivp} is continuous from a neighborhood of $y_0$ in $Y$ to $\mathcal C ([0,\tilde T];Y)$ for sufficiently small $0<\tilde T \leq T$.
\end{theorem}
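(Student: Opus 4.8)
The plan is to follow Kato's two-level scheme, in which the low-regularity space $X$ carries the contraction estimates while the high-regularity space $Y$ is controlled through the conjugation operator $S$. The organizing identity is that if $y$ solves \eqref{abstractivp} and we set $v=Sy$, then by \eqref{K3}
\[
v_t = S A(y) S^{-1} v + S f(y) = \big( A(y) + B(y) \big) v + S f(y),
\]
so $v$ obeys an equation of the same type but with the genuinely bounded perturbation $B(y)\in\mathcal L(X)$ added to the generator. Since $S\colon Y\to X$ is a topological isomorphism, an $X$-estimate for $v$ is an equivalent $Y$-estimate for $y$; this is the mechanism that transfers the semigroup bound \eqref{K1} into control of $\|y\|_Y$.

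First I would set up the linear theory. For a fixed path $w\in\mathcal C([0,\tilde T];Y)$ taking values in a ball $\mathrm B_r(0)\subseteq Y$, assumption \eqref{K1} makes $\{A(w(t))\}_{t\in[0,\tilde T]}$ a stable family of generators on $X$ with uniform stability constant $\omega_r$, while \eqref{K2} and \eqref{K3} provide a common domain containing $Y$, norm continuity of $t\mapsto A(w(t))\in\mathcal L(Y,X)$, and norm continuity of $t\mapsto B(w(t))\in\mathcal L(X)$. Kato's theory of linear evolution equations then produces a unique evolution system (propagator) $U_w(t,s)$ on $X$ which leaves $Y$ invariant, is strongly continuous on $Y$, and solves the inhomogeneous problem $y_t=A(w(t))y+g(t)$ by Duhamel's formula. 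With this in hand I would run the Picard-type iteration $y^{(0)}\equiv y_0$, $\partial_t y^{(n+1)} = A(y^{(n)})\,y^{(n+1)} + f(y^{(n)})$, $y^{(n+1)}(0)=y_0$, each step being well-defined by the linear theory.

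Next I would close the estimates. Applying the conjugation identity to each iterate and using \eqref{K1}, the uniform bound on $B$ from \eqref{K3}, and the $Y$-Lipschitz (hence local $Y$-boundedness) of $f$ from \eqref{K4}, a Gronwall argument yields a differential inequality of the form $\tfrac{d}{dt}\|y^{(n+1)}\|_Y\le C(1+\|y^{(n+1)}\|_Y)$ with $C$ depending only on the ball radius; choosing $\tilde T$ small keeps every iterate inside a fixed ball $\mathrm B_R(0)\subseteq Y$. For convergence I would estimate the differences $w^{(n)}=y^{(n+1)}-y^{(n)}$ in the weaker $X$-norm: subtracting consecutive equations and using $\|A(y^{(n)})-A(y^{(n-1)})\|_{\mathcal L(Y,X)}\le C_1\|w^{(n-1)}\|_X$ from \eqref{K2} together with the $X$-Lipschitz bound on $f$ from \eqref{K4} gives $\tfrac{d}{dt}\|w^{(n)}\|_X\le \omega_R\|w^{(n)}\|_X + C\|w^{(n-1)}\|_X$, whence $(y^{(n)})$ is Cauchy in $\mathcal C([0,\tilde T];X)$. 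The limit $y$ lies in $L^\infty([0,\tilde T];Y)$ by the uniform bound, is weakly continuous into $Y$, and solves \eqref{abstractivp}; strong continuity $y\in\mathcal C([0,\tilde T];Y)$ then follows from the energy identity for $Sy$ (right-continuity of $t\mapsto\|y(t)\|_Y$ together with weak continuity forces strong continuity in the Hilbert space $Y$), and $y\in\mathcal C^1((0,\tilde T);X)$ is read off the equation. Uniqueness is the same $X$-difference estimate with Gronwall, and the maximal time $T$ is obtained by the standard continuation argument.

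The main obstacle is continuous dependence in the strong topology. All the contraction estimates above naturally give continuity of the data-to-solution map only into $\mathcal C([0,\tilde T];X)$, since subtracting two solutions and applying Gronwall controls just the $X$-norm of the difference. To upgrade this to continuity into $\mathcal C([0,\tilde T];Y)$ I would combine the uniform $Y$-bounds (which yield weak-$Y$ continuity of the map) with a Bona--Smith regularization: approximate $y_0$ by smoother data, use the uniform estimates and interpolation between $X$ and $Y$ to control the difference of solutions at the intermediate level, and again exploit that in the Hilbert space $Y$ weak convergence together with convergence of norms forces strong convergence. Showing that $\|y(t)\|_Y$ depends continuously on the initial datum, uniformly on $[0,\tilde T]$, is the delicate point and closes the argument.
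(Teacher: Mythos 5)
First, a point of reference: the paper itself does not prove this theorem at all --- it is quoted as a black box from Kato's lecture notes \cite{Kato} --- so your attempt can only be compared with Kato's original argument. Your existence--uniqueness scheme is faithful to it: the conjugation $v=Sy$ that turns \eqref{K3} into a bounded perturbation of the generator, the linear evolution system $U_w(t,s)$ for frozen coefficients (stability with constant $M=1$ comes exactly from the uniform quasi-contraction bound in \eqref{K1}), uniform $Y$-bounds via Duhamel, and contraction in the weaker $X$-metric on a $Y$-ball are precisely Kato's steps. You also correctly identified why \eqref{K2} is formulated as an $X$-Lipschitz condition: it makes $t\mapsto A(y^{(n)}(t))$ norm continuous in $\mathcal L(Y,X)$ even though the iterates only converge in $X$.

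Two of your later steps, however, have genuine gaps. First, the shortcut to strong $Y$-continuity of the solution is incorrect as stated: forward-in-time energy estimates combined with weak continuity yield only \emph{right} strong continuity of $t\mapsto y(t)$ in $Y$; left continuity does not follow, because \eqref{K1} provides semigroups, not groups, so the estimate cannot be run backwards in time. The correct route --- which your own setup already supplies --- is to observe that the fixed point $y$ solves a \emph{linear} problem whose coefficient family $t\mapsto A(y(t))$ is norm continuous in $\mathcal L(Y,X)$ (by \eqref{K2} and the $X$-continuity of $y$), so Kato's linear theory produces a solution in $\mathcal C([0,\tilde T];Y)\cap \mathcal C^1((0,\tilde T);X)$ which coincides with $y$ by uniqueness. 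Second, and more seriously, the Bona--Smith regularization you propose for continuous dependence cannot be implemented at this level of abstraction: the hypotheses provide no mollifiers and no space ``smoother than $Y$'', and even if one manufactured approximate data (say in $S^{-1}Y$), one would need the entire solution theory to hold for the pair $(Y,S^{-1}Y)$, i.e.~assumptions \eqref{K1}--\eqref{K4} one rung higher, which is not assumed. Kato's proof instead remains within the linear theory: if $y_0^n\to y_0$ in $Y$, then the already-established $X$-convergence of solutions and \eqref{K2} give $A(y^n(\cdot))\to A(y(\cdot))$ in $\mathcal L(Y,X)$ uniformly in $t$, with uniform stability and admissibility constants, and his convergence theorem for linear evolution families upgrades this to strong convergence of the propagators on $Y$, whence $y^n\to y$ in $\mathcal C([0,\tilde T];Y)$ through the Duhamel representation. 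Bona--Smith is a perfectly good tool in the concrete $H^s_\p$ application of this paper (where Friedrichs mollifiers and well-posedness at every regularity level $s'>3/2$ are available), but not for the abstract theorem you were asked to prove.
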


The first step of our proof provides a reformulation of equation \eqref{maineq} into the form \eqref{abstractivp}. To simplify the notation, let us consider the quasilinear evolution equation
\begin{align}
\begin{aligned} \label{gen_eq}
u_{t}  - \mu u_{x x t} &= \alpha_1 u_x  + \alpha_2  u_{x x x} 
  + \alpha_3 u u_{x} \\
&+\beta_1  u_{x} u_{x x} + \beta_2 u u_{x x x} 
+  \gamma_1 u u_{x} u_{x x} + \gamma_2 u^2 u_{x x x} + \gamma_3  u^3_{x},
\end{aligned}
\end{align}
with $\mu>0$ and $\alpha_1, \alpha_2, \alpha_3, \beta_1, \beta_2, \gamma_1, \gamma_2, \gamma_3 \in \R$. Let $\Lambda_\mu$ denote the first order pseudo differential operator $(1- \mu \partial^2_x)^{1/2}$.
If the coefficients $\gamma_i$, $i=1,2,3$, satisfy
\be \label{rel_gamma}
\gamma_1=2(\gamma_2+\gamma_3),
\ee
an application of $\Lambda^{-2}_\mu$  to \eqref{gen_eq} yields the abstract form
\be \label{maineq_abstract}
u_t + \bar A (u) u = \bar f(u),
\ee
where the quasilinear part $\bar A(u)$ is given by
\be \label{maineq_abstract_quasipart}
\bar A(u) = \mu^{-1} (\alpha_2 + \beta_2 u + \gamma_2 u^2) \partial_x,
\ee
and the semilinear part $\bar f(u)$ satisfies
\be \label{maineq_abstract_semipart}
\bar f(u) = \Lambda^{-2}_\mu \partial_x \bigg[ 
\Big(\alpha_1 + \frac{\alpha_2}{\mu}\Big) u + \Big(\frac{\alpha_3}{2} + \frac{\beta_2}{2\mu}\Big) u^2 
+ \frac{\gamma_2}{3\mu} u^3+ \frac{\beta_1- 3 \beta_2}{2} u^2_x + (\gamma_3-2\gamma_2) u u^2_x \bigg].
\ee

\begin{remark} \label{rem_reform}
The above procedure requires $\mu\neq0$; if $\mu$ is negative, the linear part of \eqref{gen_eq} becomes ill-posed, which explains the assumption that $\mu$ is strictly positive.

If \eqref{rel_gamma} is satisfied, which is equivalent to the property that the cubic $\gamma_i$-terms can be written as an $x$-derivative, then \eqref{gen_eq} yields a scalar conservation law of the form $u_t + \Phi(u)_x = 0$. This is an immediate consequence of the reformulation \eqref{maineq_abstract} with \eqref{maineq_abstract_quasipart} and \eqref{maineq_abstract_semipart}, since the operators
$\Lambda^{-2}_\mu$ and $\partial_x$ commute. Indeed, the commutation property of these operators can be seen from their representations 
\bew
\partial_x f = \mathcal F^{-1}(-\I n \hat f_n), \quad 
\Lambda^{-2}_\mu f = \mathcal F^{-1}\big((1+\mu n^2)^{-1} \hat f_n\big),
\eew 
where $\mathcal F\colon \mathcal C^{\infty}(\T) \to \mathcal S(\Z)$ denotes the Fourier transform and $(\hat f_n)_{n\in\Z}$ is the sequence of Fourier coefficients associated with the Fourier series of $f\in \mathcal C^{\infty}(\T)$; here $\mathcal S(\Z)$ denotes the Schwartz space of all rapidly decreasing sequences. 

The reformulation of \eqref{maineq} as a scalar conservation law opens the way to study global weak solutions of class $\mathcal C([0,\infty),H^1)$ by introducing a distributional formulation of the corresponding Cauchy problem via multiplication of the scalar conservation law by suitable test functions $\phi(t,x)$ and integration over the time and space domain. This low-regularity class includes---unlike $H^s$ with $s>3/2$---functions with peaks or even cusps; we refer to \cite{GQ18} for the existence of weak traveling wave solutions for \eqref{maineq} exhibiting such singularities.
\end{remark}

Since \eqref{maineq} is of the form \eqref{gen_eq} and satisfies \eqref{rel_gamma} it admits the reformulation in \eqref{maineq_abstract}. 
We may now normalize the coefficients in this reformulation, since their particular values do not play a role in the subsequent analysis: they merely contribute to estimation constants, whose precise value is not of special interest. Therefore, we may henceforth  consider the abstract operator equation 
\be \label{eq_normalized1}
u_t +  A (u) u =  f(u),
\ee
where
\be \label{eq_normalized2}
 A(u) = a(u) \partial_x \quad \text{with} \quad
 a(u) = (1 +  u +  u^2)  
\ee
and 
\be \label{eq_normalized3}
 f(u) =  \Lambda^{-2} \partial_x \big[ 
 u +  u^2 
+  u^3+  u^2_x + u u^2_x \big] \quad \text{with} \quad 
\Lambda = (1-\partial^2_x)^{1/2}.
\ee
As underlying function spaces we choose
$X\coloneqq (H^{s-1}_\p,\|\cdot \|_{s-1})$ and $Y\coloneqq (H^{s}_\p,\|\cdot \|_s)$ with $s> 3/2$, and consider the topological isomorphism $S\coloneqq\Lambda \colon Y\to X$ between these spaces, which defines an isometry, 
i.e.~$\| \Lambda u \|_{s-1} = \|u\|_s$ for all $u\in H^{s}_\p$. 
For a given function $g\in H^r_\p$ with $r>1/2$ let us denote by $M_g$ the corresponding multiplication operator on $H^r_\p$, i.e.~$M_g \colon H^r_\p \to H^r_\p,  w\mapsto g w$. Since $H^r_\p$, $r>1/2$, is closed under multiplication, $M_g$ is continuous.

The subsequent lemmas show that the quasilinear part $A$ and the semilinear part $f$ given by \eqref{eq_normalized2} and \eqref{eq_normalized3}, respectively, fulfill the assumptions \eqref{K1}--\eqref{K4} of Theorem~\ref{katothm}.

\begin{lemma} \label{lem_SG}
For $a\in H^s_\p$ the operator 
\bew
A \coloneqq M_a \partial_x = a(x) \partial_x, \quad D(A) \coloneqq \{ w \in L^2_\p \colon a w \in H^{1}_\p \}
\eew
generates a strongly continuous semigroup $T(t)$ in $L^2_\p$ which satisfies
\be \label{lem_C_0_SG_est}
\|T(t)\|_{0} \leq \e^{\omega t} 
\quad \text{for all} \quad t\geq 0 \quad
\quad \text{with} \quad  \omega = \frac{1}{2} \sup_{x\in\T}|a_x|.
\ee
\end{lemma}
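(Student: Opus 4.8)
The plan is to verify the hypotheses of the Lumer--Phillips theorem in its quasi-contractive form: a densely defined, closed operator on the Hilbert space $L^2_\p$ generates a strongly continuous semigroup obeying the bound \eqref{lem_C_0_SG_est} as soon as $A-\omega I$ is dissipative and the range $R(\lambda I-A)$ equals $L^2_\p$ for one (hence all) $\lambda>\omega$. I would establish these ingredients in turn, all dissipativity estimates being driven by a single integration by parts on the circle.

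First I would record the regularity consequences of $s>3/2$: Sobolev embedding gives $a\in H^s_\p\subset L^\infty$ and $a_x\in H^{s-1}_\p\subset L^\infty$, so in particular $\omega=\tfrac12\sup_{x\in\T}|a_x|<\infty$ and $a_x$ is continuous. For $w\in H^1_\p$ one has $(aw)_x=a_xw+aw_x\in L^2_\p$, whence $H^1_\p\subseteq D(A)$; since $H^1_\p$ is dense in $L^2_\p$, the operator $A$ is densely defined. Closedness follows from the boundedness of $a$ and $a_x$: if $w_n\to w$ and $Aw_n\to h$ in $L^2_\p$, then $aw_n\to aw$ and $(aw_n)_x=Aw_n+a_xw_n\to h+a_xw$ in $L^2_\p$, so closedness of $\partial_x$ on $H^1_\p$ forces $aw\in H^1_\p$ with $(aw)_x=h+a_xw$; thus $w\in D(A)$ and $Aw=(aw)_x-a_xw=h$.

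Next I would turn to dissipativity and the range condition. For $w\in D(A)$ an integration by parts yields
\be
\mathrm{Re}\,(Aw,w)_0=\mathrm{Re}\int_\T a\,w_x\,\bar w\,\mathrm dx=-\tfrac12\int_\T a_x\,|w|^2\,\mathrm dx\le \omega\,\|w\|_0^2,
\ee
so that $A-\omega I$ is dissipative; consequently $\|(\lambda I-A)w\|_0\ge(\lambda-\omega)\|w\|_0$ for $\lambda>\omega$, and together with closedness this makes $R(\lambda I-A)$ closed. It therefore suffices to prove density of the range, i.e.\ that $v\perp R(\lambda I-A)$ implies $v=0$. Testing $(\lambda w-a w_x,v)_0=0$ against all smooth $w$ shows, in the distributional sense, that $av\in H^1_\p$ and $(av)_x=-\lambda v$; pairing this identity with $v$ in $L^2_\p$, taking real parts, and integrating by parts gives $\lambda\|v\|_0^2=-\tfrac12\int_\T a_x|v|^2\,\mathrm dx\le\omega\|v\|_0^2$, whence $v=0$ since $\lambda>\omega$. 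Thus $R(\lambda I-A)=L^2_\p$, and Lumer--Phillips delivers the semigroup together with \eqref{lem_C_0_SG_est}.

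The delicate point I expect to require real care is the rigorous justification of the two integration-by-parts identities, since a priori the elements $w\in D(A)$ and the candidate $v$ are only square integrable, so $|w|^2$ and $|v|^2$ need not lie in $W^{1,1}_\p$. Here the structure of the application helps decisively: in \eqref{eq_normalized2} the coefficient equals $a=1+u+u^2=(u+\tfrac12)^2+\tfrac34\ge\tfrac34$, so $a$ is bounded away from zero; then $1/a\in H^s_\p$ and $aw\in H^1_\p$ already force $w=(aw)/a\in H^1_\p$, so that $D(A)=H^1_\p$ and every function involved is continuous, which legitimizes the computations. For a general, possibly vanishing $a\in H^s_\p$ I would instead regularize $w$ and $v$ by Friedrichs mollifiers $J_\eps$ and pass to the limit, controlling the commutator $[a\partial_x,J_\eps]$ by the boundedness of $a_x$; it is this approximation step, rather than the algebra of the estimates, that I anticipate to be the main obstacle.
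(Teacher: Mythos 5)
Your proposal is correct in substance but follows a genuinely different route from the paper. You invoke the quasi-contractive Lumer--Phillips theorem (dissipativity of $A-\omega I$ plus the range condition), whereas the paper writes $A = A_0 - \tfrac12 M_{a_x}$ with $A_0 = M_a\partial_x + \tfrac12 M_{a_x}$, proves that $A_0$ is skew-adjoint, and then applies Stone's theorem together with a bounded-perturbation result from Pazy. Both arguments ultimately rest on the same identity $\int_\T a w_x w\,\mathrm dx = -\tfrac12\int_\T a_x w^2\,\mathrm dx$, and both face the same obstruction: an element of $D(A)$ need not lie in $H^1_\p$, so this integration by parts is not directly available --- you have correctly isolated this as the delicate point, both for the dissipativity estimate and for the adjoint computation in your range argument. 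The paper resolves it by showing in its Step~2 that $\mathcal C^\infty_\p$ is a core for $A$, via exactly the Friedrichs-mollifier commutator estimate you anticipate (a family $P_n v = a(\rho_n\ast v)_x - \rho_n\ast(a v_x)$ is shown to be uniformly bounded on $L^2$ and to converge to zero on the dense subspace $H^1$); so your sketched fallback is viable, but it is in fact the bulk of the work, and as written your proposal leaves it unproved for general $a\in H^s_\p$, which is the generality in which the lemma is stated. On the other hand, your observation that in the actual application $a(u)=1+u+u^2=(u+\tfrac12)^2+\tfrac34\ge\tfrac34$, so that $1/a\in H^s_\p$ and hence $D(A)=H^1_\p$, is a genuine simplification the paper does not exploit: it legitimizes every integration by parts outright and would suffice for Theorem~\ref{wpresult} with a version of the lemma restricted to coefficients bounded away from zero. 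A minor additional merit of your route is that you verify closedness of $A$ explicitly, which the paper obtains only implicitly from generation.
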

\begin{proof}
The idea of the proof is to reveal $A$ as a bounded perturbation of a skew-adjoint operator. Then Stone's theorem and a standard perturbation result yield the assertion of the lemma.

\emph{Step 1}.
We first observe that $A\colon L^2_\p \supseteq D(A) \to L^2_\p$ is indeed a well-defined linear operator, i.e.~the action of $A$ on an element $w\in D(A)$ is uniquely determined and lies in $L^2_\p$. To see this, we first consider a general element $w\in L^2_\p$. Since the usual product $H^1_\p \times H^1_\p \to  H^1_\p$ admits an extension to a continuous product 
$H^1_\p \times H^{-1}_\p \to H^{-1}_\p$, Leibniz' formula for general periodic distributions restricted to this product yields that
\bew
(a w)_x = a_x w + a w_x  \quad \text{in} \; H^{-1}_\p. 
\eew
For $w\in D(A)$ we have that both $(a w)_x$ and $a_x w$ lie in $L^2_\p$, hence 
\bew
A w = (a w)_x - a_x w = a w_x \in L^2_\p.
\eew
Obviously $H^{1}_\p$ is contained in $D(A)$, so $A$ is densely defined.

\emph{Step 2}. We show that $\mathcal C^{\infty}_\p$ is a \emph{core} for $A$ in $L^2_\p$, i.e. for every $v\in D(A)$ there exists a sequence $(v_n)_n$ in $\mathcal C^{\infty}_\p$ such that $v_n \to v$ and $A v_n \to Av$ in $L^2_\p$.
Let $v\in D(A)$. We may assume that  
$\mathrm{supp}(v)\cap [0,1]$ is contained in $(0,1)$, otherwise we can employ a suitable parametrization of $\T$ with a subordinated smooth partition of unity. In this way we can consider the restriction of $v$ to $(0,1)$ and extend it to the whole real line by setting the extension identically $0$ outside of $(0,1)$. The resulting function lies in $L^2(\R)$ and we denote it again by $v$.  
Now it suffices to prove that there exists a sequence 
$(v_n)_n$ in $\mathcal C^{\infty}_c(\R)$ with $\mathrm{supp} (v_n) \subseteq (0,1)$ such that 
\bew
v_n \to v \quad  \text{and} \quad A v_n \to Av \quad  \text{in} \quad  L^2(\R).
\eew
Let $\rho\in \mathcal C^\infty_c(\R)$ be a mollifier with $\mathrm{supp}(\rho)\subseteq [0,1]$ and $\int_{\R} \rho =1$, and consider $v_n\coloneqq \rho_n \ast  v$, where $\ast$ is the convolution on $\R$ and $\rho_n(x)\coloneqq n \rho(n x)$.
Then $v_n \in \mathcal C^\infty_c((0,1))$ for $n\geq 1$ large enough and $v_n \to v$ in $L^2(\R)$.
We have to prove that 
\be \label{pr_L-core_0}
(av_n)_x \to (av)_x  \quad \text{in} \quad  L^2(\R).
\ee 

By assumption and our considerations we have that both $(a v)_x$ and $a_x v$ in $L^2(\R)$, thus also 
$av_x\in L^2(\R)$, and hence 
\be \label{pr_L-core_1}
\rho_n *(av_x) \to av_x \quad \text{in} \quad  L^2(\R).
\ee
Since $a_x \in H^{s-1}_\p$ we may identify it as an $L^\infty(\R)$ element, thus 
the multiplication with $a_x$ constitutes a continuous operation in $L^2(\R)$ and we find that 
\be \label{pr_L-core_2}
a_x v_n \to a_x v  \quad \text{in} \quad  L^2(\R).
\ee 
In view of the identity 
\begin{align*}
    (av_n)_x-(av)_x =[a_xv_n-a_xv] +[\rho_n *(av_x)-av_x] + [a(v_n)_x - \rho_n*(av_x)],
\end{align*}
which holds in $L^2(\R)$, together with \eqref{pr_L-core_1} and \eqref{pr_L-core_2} it suffices to show that 
\be \label{pr_L-core_3}
a(v_n)_x-\rho_n*(av_x) \to 0 \quad \text{in} \quad  L^2(\R)
\ee
in order to infer \eqref{pr_L-core_0}.
To this end, we consider the linear operators  
\bew
P_n v \coloneqq a(v_n)_x-\rho_n*(av_x), \quad  n\geq 1.
\eew
If $v$ is regular enough, say $v\in H^1(\R)$, then obviously $P_n v \to 0$ in $L^2(\R)$.
In the following we show that the family $\{P_n\}_{n\geq1}$ extends to a family of uniformly bounded linear operators in $L^2(\R)$. 
A well-known corollary of the uniform boundedness principle and the density of $H^1(\R)\subseteq L^2(\R)$ then yield 
that the sequence of operators $(P_n)_{n\geq1}$ converges pointwise to the trivial operator $P\equiv 0$ in $\mathcal L(L^2(\R))$; this implies in particular that \eqref{pr_L-core_3} holds  for our given $v\in D(A)$.

We first observe that every $P_n$ extends to a continuous linear operator on $L^2(\R)$. Note that 
\bew
P_n v = a((\rho_n)_x \ast v ) - (\rho_n)_x \ast (av) + \rho_n \ast (a_x v),
\eew
and for a general $v\in L^2(\R)$ and arbitrary $x\in \R$ we obtain that
\begin{align*}
\begin{aligned}
P_n v(x) &= \int_{\R}(\rho_n)_y(y) \big(a(x) - a(x-y)\big)v(x-y) \,  \mathrm d y  + \big(\rho_n*(a_x v)\big)(x)\\
             &= n^2\int_{\R} \rho_y(ny) \big(a(x)-a(x-y)\big) v(x-y) \, \mathrm d y +\big(\rho_n*(a_x v)\big)(x).
\end{aligned}
\end{align*}
By recalling that $\mathrm{supp}(\rho)\subseteq [0,1]$, we can write 
\begin{align*}
n^2\int_{\R} \rho_y(ny) \big(a(x)&-a(x-y)\big) v(x-y) \, \mathrm d y \\
&= n\int_{0}^1 \rho_y(y) \Big[a(x)-a\Big(x-\frac{y}{n}\Big)\Big] \, v\Big(x-\frac{y}{n}\Big) \, \mathrm d y. 
\end{align*}
Thus in view of the mean value theorem, we obtain the estimate 
\begin{align*}
\bigg|n^2\int_{\R} \rho_y(ny) \big(a(x)&-a(x-y)\big) v(x-y) \, \mathrm d y \bigg| \\
        &\leq \sup_{x\in\R}|a_x| \, \int_{0}^1 |\rho_y(y)|\,|y|\, \Big|v\Big(x-\frac{y}{n}\Big)\Big|\, \mathrm d y.
\end{align*} 
Let now  $C\coloneqq \sup_{x\in\R}|a_x|^2 \, \int_{0}^1 \rho^2_y(y) y^2\, \mathrm d y$. 
Then  
the Cauchy-Schwarz inequality and Fubini's theorem  yield that
\begin{align*}
\bigg\|n^2\int_{\R} \rho_y(ny) (a(x)-a(x-y)) &v(x-y)\, \mathrm d y \bigg\|^2_{0}\\
&\leq C\int_\R  \int_{0}^1 \Big[v\Big(x-\frac{y}{n}\Big)\Big]^2 \, \mathrm d y \, \mathrm d x\\
&= C \int_{0}^1\int_\R \Big[v\Big(x-\frac{y}{n}\Big)\Big]^2 \, \mathrm d x \, \mathrm d y\\
&= C\|v\|^2_{0}.
\end{align*}
Moreover, we obtain by Young's inequality that 
\begin{align*}
    \|\rho_n*(a_x v)\|_{0}  
    \leq  \| a_x v\|_0 
    \leq  \sup_{x\in\R}(|a_x|) \, \|v\|_{0}.
\end{align*}
We conclude that  
\bew
\|P_n\|_{\mathcal L(L^2_\p)} \leq K 
\eew
holds uniformly for all $n\geq1$ with $K=(\sqrt{C}+\sup_{x\in\R}|a_x|)$.  
It follows that $\mathcal C^\infty_\p$ is a core for $A$ in $L^2_\p$.

\emph{Step 3.} We show that $A_0 \coloneqq A + \frac{1}{2} M_{a_x}$ with domain $D(A_0) \coloneqq D(A)$ is skew-adjoint in $L^2_\p$. 
Let $A^*_0$ denote the adjoint of $A_0$ with domain $D(A^*_0)$, which is well-defined since $A$ (and hence $A_0$) is densely defined.
Fix $w\in D(A^*_0)$, then $A^*_0 w \in L^2_\p$, and for $\varphi \in \mathcal C^\infty_\p \subseteq D(A_0)$ we have that 
$(A_0 \varphi , w)_{0}=(\varphi, A^*_0 w)_{0}$. 
We consider the linear functional $F_w\colon \mathcal C^\infty_\p\to \R$ given by 
\bew
F_w(\varphi) \coloneqq (A_0 \varphi, w)_0 = \int_\T \Big(a \varphi_x + \frac{1}{2} a_x \varphi \Big) w \, \mathrm d x
= (\varphi, A^*_0 w)_0.
\eew
Obviously, $F_w$ is continuous with respect to the $L^2_\p$-norm. Integration by parts yields that $F_w(\varphi)$ coincides with the action of the regular periodic distribution 
$-A_0 w = -(a w_x + \frac{1}{2} a_x w)\in L^2_\p$ 
on the test function $\varphi$. 
By density of $\mathcal C^\infty_\p$ in $L^2_\p$ we infer that $a w \in H^1_\p$ and $A^*_0 w= -A_0 w$, i.e.~$w\in D(A_0)$, which implies $A^*_0\subseteq -A_0$.  

For $v\in D(A_0)$ let $v_n\coloneqq \rho_n \ast v$. 
By integration by parts and the second step of the proof we obtain for every $w\in D(A_0)$ that
\begin{align} \label{ibp}
\begin{aligned}
(A_0 w,v)_0 &= \lim_{n\to\infty} (A_0 w,v_n)_0 
=  \lim_{n\to\infty} \int_{\T} \Big( a w_x  + \frac{1}{2} a_x w  \Big) v_n \, \mathrm d x \\
&= - \lim_{n\to\infty} \int_{\T} \Big( a (v_n)_x  + \frac{1}{2} a_x v_n  \Big) w \, \mathrm d x \\
&= - \int_{\T} \Big( a v_x  + \frac{1}{2} a_x v \Big) w \, \mathrm d x
= - (w,A_0 v)_0,
\end{aligned}
\end{align}
in particular $v\in D(A^*_0)$ and $-A_0 \subseteq A^*_0$.

\emph{Step 4.} By the previous step the operator $\I A_0$ is self-adjoint and therefore Stone's theorem implies that $A_0$ is the infinitesimal generator of a strongly continuous semigroup of contractions in $L^2_\p$. A standard perturbation argument (see e.g.~\cite[Thm.~3.1.1]{Pazy}) now yields that $A$ is the generator of a strongly continuous semigroup $T(s)$ in $L^2_\p$, which satisfies the estimate 
\eqref{lem_C_0_SG_est} by noting that $a_x$ is a bounded function due to Sobolev's embedding theorem and therefore the operator norm of $M_{a_x} \colon L^2_\p \to L^2_\p$ can be estimated by
$\sup_{x\in \T} |a_x(x)|$.
\end{proof}

Let $[\cdot,\cdot]$ denote the usual commutator of two operators. In the course of our analysis we will make use of commutator and product estimates taken from \cite[Proposition B.10.(2)]{Lannes} and \cite[Lemma A1]{Kato}, respectively. These results are stated for functions and distributions defined on the real line (or $\R^d$); the corresponding assertions on the periodic domain $\T$ hold true analogously. We summarize these estimates formulated for one-dimensional periodic functions in the following lemma.

\begin{lemma}\label{L_comm}
Let $r>1/2$.	
	\begin{enumerate}[(i)]
		\item   \label{com_est_2}
		If $-1/2 < t \leq r +1$, there exists a constant $C_{r,t}>0$ such that 
		\bew
		\big\|[\Lambda^t,M_g]h\big\|_0 \leq C_{r,t} \, \|g\|_{r+1} \, \|h\|_{t-1}
		\eew
		for all $g\in H^{r +1}_\p$ and $h\in H^{t-1}_\p$.
		\item   \label{com_est_3}
		If $-r < t\leq r$, there exists a constant $C_{r,t}>0$ such that 
		\bew
		\|fg\|_t\leq C_{r,t} \, \|f\|_r \, \|g\|_t
		\eew
		for all $f\in H^{r}_\p$ and $g\in H^{t}_\p$.
	\end{enumerate}
\end{lemma}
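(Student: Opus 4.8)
The plan is to prove both estimates by a direct Fourier-series computation on $\T$, mirroring the Fourier-transform proofs on $\R$ from \cite{Lannes, Kato}; the only substantive change is that the frequency variable now ranges over $\Z$ rather than $\R$, so every integral over dual space becomes a sum and Young's inequality for functions is replaced by Young's inequality for sequences. I would set $\langle n\rangle\coloneqq (1+n^2)^{1/2}$, so that $\widehat{\Lambda^t h}_n=\langle n\rangle^t \hat h_n$ and $\|h\|_s^2=\sum_{n\in\Z}\langle n\rangle^{2s}|\hat h_n|^2$, and recall that the Fourier coefficients of a product are the convolution of the individual coefficient sequences, $\widehat{fg}_n=\sum_{k\in\Z}\hat f_{n-k}\hat g_k$. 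The whole argument then reduces to a pair of elementary inequalities for the integer frequencies combined with bookkeeping in $\ell^2(\Z)$.

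For the product estimate \eqref{com_est_3}, I would start from $\langle n\rangle^t|\widehat{fg}_n|\le \sum_{k\in\Z} \langle n\rangle^t|\hat f_{n-k}|\,|\hat g_k|$ and split the convolution sum according to whether $\langle n-k\rangle\le \langle k\rangle$ or $\langle n-k\rangle>\langle k\rangle$. In the first (low--high) region one has $\langle n\rangle\le C\langle k\rangle$, so the weight $\langle n\rangle^t$ is absorbed by the $g$-factor and the summand is dominated by a convolution of $(|\hat f_m|)_m$ with $(\langle k\rangle^t|\hat g_k|)_k$; in the complementary (high) region $\langle n-k\rangle$ dominates and the weight is instead redistributed onto the $f$-factor carrying its natural weight $\langle n-k\rangle^r$. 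In either region Young's inequality for sequences bounds the $\ell^2$-norm in $n$ by a product of an $\ell^1$-norm and an $\ell^2$-norm, where the $\ell^2$-factor is controlled by $\|f\|_r$ or $\|g\|_t$. The decisive point is that the residual $\ell^1$-factor is finite via Cauchy--Schwarz against $(\langle m\rangle^{-r})_m$, and $\sum_m\langle m\rangle^{-2r}<\infty$ holds precisely when $r>1/2$; this is exactly where the hypothesis enters, reproducing on $\T$ the role of the Sobolev embedding on $\R$.

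For the commutator estimate \eqref{com_est_2}, the symbol of $[\Lambda^t,M_g]$ is $\langle n\rangle^t-\langle k\rangle^t$, so that
\bew
\widehat{[\Lambda^t,M_g]h}_n=\sum_{k\in\Z}\big(\langle n\rangle^t-\langle k\rangle^t\big)\,\hat g_{n-k}\,\hat h_k.
\eew
The key algebraic input is a discrete mean-value bound of the form $|\langle n\rangle^t-\langle k\rangle^t|\le C_t\,\langle n-k\rangle\big(\langle n\rangle^{t-1}+\langle k\rangle^{t-1}\big)$, obtained by applying the mean value theorem to $\xi\mapsto\langle\xi\rangle^t$. This converts the commutator symbol into a product in which $g$ absorbs exactly one additional factor $\langle n-k\rangle$ (hence the gain to $\|g\|_{r+1}$) while $h$ is evaluated at one derivative lower (hence $\|h\|_{t-1}$). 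Inserting this bound, I would again perform the low--high/high case split and reduce to a sequence convolution, with the extra factor $\langle n-k\rangle$ on $g$ being harmless because $g\in H^{r+1}_\p$ still yields $(\langle m\rangle|\hat g_m|)_m\in\ell^1(\Z)$ under $r>1/2$, exactly as above.

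The main obstacle is not any single hard step but the careful frequency bookkeeping in the two case splits, and in particular verifying that the stated ranges $-r<t\le r$ and $-1/2<t\le r+1$ are precisely those for which the weight can be redistributed so that every residual summation sequence lands in $\ell^2(\Z)$ under $r>1/2$. The negative values of $t$ and the endpoints are the delicate regime: there one must use the Peetre-type inequalities in their sharp directional form (upper bounds on $\langle n\rangle^t$ require lower bounds on $\langle n\rangle$, which fail when the two frequencies nearly cancel), and track that the loss of regularity is consistently distributed onto the correct factor. Since these are exactly the manipulations carried out on $\R$ in \cite{Lannes, Kato} and the integer frequencies satisfy the very same elementary inequalities, the argument transfers without genuine modification; this is the precise content of the assertion that the estimates hold ``analogously'' on $\T$.
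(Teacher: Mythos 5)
The paper does not actually prove this lemma: it imports the real-line statements from \cite{Lannes} (Prop.~B.10(2)) and \cite{Kato} (Lemma A1) and simply asserts that the periodic analogues hold. Your direct Fourier-series argument is therefore a genuinely different and more self-contained route, and its overall architecture --- Peetre-type redistribution of the weight $\langle n\rangle^t$, the discrete mean-value bound $|\langle n\rangle^t-\langle k\rangle^t|\leq C_t\langle n-k\rangle(\langle n\rangle^{t-1}+\langle k\rangle^{t-1})$ for the commutator symbol, and Young's inequality $\ell^1\ast\ell^2\to\ell^2$ with the residual $\ell^1$-factor controlled by Cauchy--Schwarz against $(\langle m\rangle^{-r})_m$ using $r>1/2$ --- is the correct and standard mechanism. (One small care point for the mean-value bound: apply it to $|n|$ and $|k|$ rather than $n$ and $k$, since $\xi\mapsto\langle\xi\rangle^t$ is not monotone across $\xi=0$; with that adjustment the inequality holds in both cases $t\geq 1$ and $t<1$.)

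There is, however, a concrete shortfall in the bookkeeping at the lower end of the stated ranges of $t$, and it is larger than your closing paragraph suggests. In part (ii), for $t<0$ the redistribution $\langle n\rangle^t\leq C\langle n-k\rangle^{-t}\langle k\rangle^t$ forces the sequence $(\langle m\rangle^{-t}|\hat f_m|)_m$ into the $\ell^1$ slot, and Cauchy--Schwarz then requires $-t<r-\tfrac12$, i.e.\ $t>\tfrac12-r$; for $\tfrac12<r\leq 1$ this misses the whole interval $-r<t\leq\tfrac12-r$. No rearrangement of the low--high/high--low split repairs this: the clean way to reach $t>-r$ is duality, reducing $-r<t<0$ to the already-proved case $0<-t\leq r$ via $(fg,h)_0=(g,fh)_0$. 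Similarly, in part (i) the term carrying the factor $\langle n\rangle^{t-1}$ requires, after Peetre, that $(\langle m\rangle^{1+|1-t|}|\hat g_m|)_m\in\ell^1$, i.e.\ $|1-t|<r+\tfrac12$, which for $t\leq 1$ gives only $t>\tfrac12-r$ rather than $t>-\tfrac12$; the full range in Lannes' Proposition B.10 rests on a finer decomposition than a single mean-value-plus-Peetre step. So your sketch proves both estimates on the ranges actually used in this paper (the lemma is only ever invoked with $t=s$ or $t=s-1$, $s>3/2$, hence $t\geq\tfrac12$), but it does not, as written, establish the lemma in the full generality stated; the sentence claiming the argument ``transfers without genuine modification'' conceals exactly the missing duality/refined-decomposition step.
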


\begin{lemma} \label{lem_SG2}
	For $a\in H^s_\p$ the operator 
	\bew
	\tilde A \coloneqq M_a \partial_x = a(x) \partial_x, \quad D(\tilde A) \coloneqq \{ w \in H^{s-1}_\p \colon a w \in H^{s}_\p \}
	\eew
	generates a strongly continuous semigroup $\tilde T(t)$ in $H^{s-1}_\p$ which satisfies
	\bew 
	\|\tilde T(t)\|_{s-1} \leq \e^{\tilde \omega t} 
	\quad \text{for all} \quad t\geq 0 \quad
	\quad \text{with} \quad  \tilde \omega = \frac{1}{2} \|a_x\|_{s-1}.
	\eew
\end{lemma}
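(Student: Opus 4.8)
The plan is to reduce the statement to the already established Lemma~\ref{lem_SG} by conjugating $\tilde A$ with the isometric isomorphism $\Lambda^{s-1}\colon H^{s-1}_\p\to L^2_\p$. Since conjugation by an isometry preserves both the property of generating a strongly continuous semigroup and the associated exponential bound, $\tilde A$ generates a semigroup $\tilde T(t)$ on $H^{s-1}_\p$ with $\|\tilde T(t)\|_{s-1}=\|\hat T(t)\|_0$, where $\hat A\coloneqq \Lambda^{s-1}\tilde A\Lambda^{-(s-1)}$ and $\hat T(t)$ is the corresponding semigroup on $L^2_\p$. I would therefore carry out the entire analysis for $\hat A$ on $L^2_\p$, where Lemma~\ref{lem_SG} is available.

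Using that $\partial_x$ commutes with $\Lambda^{-(s-1)}$, one obtains the decomposition
\[
\hat A = M_a\partial_x + [\Lambda^{s-1},M_a]\,\Lambda^{-(s-1)}\partial_x \eqqcolon A + B .
\]
The first summand $A=M_a\partial_x$ is exactly the operator treated in Lemma~\ref{lem_SG}, so it generates a strongly continuous semigroup on $L^2_\p$. The key step is to show that the remainder $B$ is a \emph{bounded} operator on $L^2_\p$: applying the commutator estimate of Lemma~\ref{L_comm}\eqref{com_est_2} with $t=s-1$ and $r=s-1$ (admissible since $s>3/2$ forces $-1/2<s-1\le s=r+1$ and $a\in H^s_\p$) yields $\|[\Lambda^{s-1},M_a]h\|_0\le C\,\|a\|_s\,\|h\|_{s-2}$, while a direct Fourier computation gives $\|\Lambda^{-(s-1)}\partial_x w\|_{s-2}\le\|w\|_0$. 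Hence $\|Bw\|_0\le C\|a\|_s\|w\|_0$, and the bounded perturbation theorem (e.g.~\cite[Thm.~3.1.1]{Pazy}) shows that $\hat A=A+B$, and therefore $\tilde A$, generates a strongly continuous semigroup.

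It remains to pin down the growth rate $\tilde\omega=\tfrac12\|a_x\|_{s-1}$. For this I would pass to a quasi-dissipativity estimate: writing $v\coloneqq\Lambda^{-(s-1)}w$ one has $\mathrm{Re}(\hat A w,w)_0=\mathrm{Re}(a v_x,v)_{s-1}$ with $\|w\|_0=\|v\|_{s-1}$, so it suffices to control $\mathrm{Re}(av_x,v)_{s-1}$. Integrating by parts inside the $H^{s-1}$-inner product and exploiting that $a\partial_x$ is antisymmetric in $L^2_\p$, the leading contribution collapses to $-\tfrac12(a_x v,v)_{s-1}$, which is dominated by $\|a_x\|_{s-1}\|v\|_{s-1}^2$ via the product estimate Lemma~\ref{L_comm}\eqref{com_est_3} (applied with $r=t=s-1$); the discrepancy between the two orderings $(av_x,v)_{s-1}$ and $(av,v_x)_{s-1}$ is a pure commutator remainder, again governed by Lemma~\ref{L_comm}\eqref{com_est_2}. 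Feeding the resulting quasi-dissipativity bound into the Lumer--Phillips characterisation applied to $\hat A$ yields the exponential estimate, and transporting back through $\Lambda^{-(s-1)}$ returns the bound for $\tilde T(t)$ in $H^{s-1}_\p$.

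The main obstacle is this last half. Generation of the semigroup follows cleanly from the perturbation argument, but extracting the growth rate as the \emph{first-order} quantity $\tfrac12\|a_x\|_{s-1}$—rather than a cruder bound involving the full norm $\|a\|_s$ inherited from $B$—forces one to isolate the cancellation produced by the antisymmetry of $a\partial_x$ and to absorb the commutators $[\Lambda^{s-1},M_a]$ into genuinely lower-order terms. This is precisely where the periodic commutator and product estimates of Lemma~\ref{L_comm} do the decisive work, and where the computation must be organised so that the irreducible part of $\mathrm{Re}(av_x,v)_{s-1}$ is the single term $-\tfrac12(a_xv,v)_{s-1}$.
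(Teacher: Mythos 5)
Your generation argument is essentially identical to the paper's: conjugate by the isometry $\Lambda^{s-1}$, write $\Lambda^{s-1}A\Lambda^{1-s}-A=[\Lambda^{s-1},M_a]\Lambda^{1-s}\partial_x$, bound this commutator in $\mathcal L(L^2_\p)$ via Lemma \ref{L_comm}\eqref{com_est_2}, and conclude by the bounded perturbation theorem together with Lemma \ref{lem_SG}. Where you diverge is the growth rate: the paper invokes Pazy's admissibility theorems (Thms.~4.5.5 and 4.5.8) to identify $\tilde T(t)$ as the restriction of the $L^2_\p$-semigroup to $H^{s-1}_\p$ and reads the bound off the conjugated semigroup, whereas you propose an extra quasi-dissipativity/Lumer--Phillips computation to recover the constant $\tfrac12\|a_x\|_{s-1}$ exactly.

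That extra step over-claims. The commutator $[\Lambda^{s-1},M_a]\Lambda^{1-s}\partial_x$ is a bounded operator on $L^2_\p$ of norm $O(\|a\|_s)$, so its contribution to $\mathrm{Re}(\hat Aw,w)_0$ is of size $C\|a\|_s\|w\|_0^2$ --- the \emph{same} order as the term $-\tfrac12(a_x\Lambda^{s-1}v,\Lambda^{s-1}v)_0$ you want to isolate. It is not ``genuinely lower order'' in any sense that lets you drop it from the quasi-dissipativity constant, so your computation can only deliver a rate of the form $\tfrac12\sup_{x\in\T}|a_x|+C\|a\|_s$, not $\tfrac12\|a_x\|_{s-1}$. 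This is not a defect relative to the paper, whose own perturbation argument likewise yields $\omega^*=\tfrac12\sup_{x\in\T}|a_x|+\|A_1-A\|_{\mathcal L(L^2_\p)}$; and for hypothesis \eqref{K1} of Theorem \ref{katothm} only a growth rate uniform over balls in $H^s_\p$ is needed, which both versions provide. So your proof is sound once you weaken the claimed constant to a uniform bound $C(\|a\|_s)$; also note that Lumer--Phillips requires a range condition, which you should extract from the generation result already established, and that some bookkeeping is needed to confirm that the generator obtained on $H^{s-1}_\p$ has precisely the stated domain $D(\tilde A)=\{w\in H^{s-1}_\p:aw\in H^s_\p\}$ (the paper gets this from the admissibility theorems).
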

\begin{proof}
Recall from Lemma \ref{lem_SG} that the operator $A=M_a \partial_x$ with domain $D(A)=\{ w \in L^2_\p \colon a w \in H^{1}_\p \}$ lies in $\mathcal G(1,\omega,L^2_\p)$ for $\omega =  \sup_{x\in\T}|a_x|/2$.	

Let $A_1\coloneqq \Lambda^{s-1} A  \Lambda^{1-s}$. In the following we show that $A_1-A\in \mathcal L(L^2_\p)$.	
Since $\partial_x$ commutes with $\Lambda^{s-1}$ and $\Lambda^{1-s}$, we may write
$A_1 - A = [\Lambda^{s-1},M_a] \Lambda^{1-s} \partial_x$. 
Let $w\in L^2_\p$ be arbitrary. Since $\Lambda^{1-s} \partial_x \in \mathcal L(L^2_\p,H^{s-2}_\p)$ we can directly apply Lemma \ref{L_comm} \eqref{com_est_2} to infer that
\begin{align*}
\|(A_1-A) w\|_0 &=  \|[\Lambda^{s-1},M_a] \Lambda^{1-s} \partial_x w\|_0 \\
&\leq C \, \|a\|_s \, \| \Lambda^{1-s} \partial_x w\|_{s-2} \\
&\leq C \, \|a\|_s \, \| w\|_0,
\end{align*}
where the generic constant $C$ solely depends on $s$. 

We just showed that $A_1$ is a bounded perturbation of $A$, it therefore generates a strongly continuous semigroup (cf.~\cite[Thm.~3.1.1]{Pazy}); more specifically, $A_1\in G(1,\omega^*,L^2_\p)$ for $\omega^*=\omega + \|A_1-A\|_{\mathcal L(L^2_\p)}$.
Since $\Lambda^{s-1}\colon H^{s-1}_\p \to L^2_\p$ is an (isometric) isomorphism, it follows from \cite[Thm.~4.5.8]{Pazy} that $H^{s-1}_\p$ is $A$-admissible. That is, $H^{s-1}_\p$ is an invariant subspace of $T(t)$, $t\geq0$, the strongly continuous semigroup generated by $A$, and the restriction of $T(t)$ to $H^{s-1}_\p$ is strongly continuous with respect to $\|\cdot\|_{s-1}$. 
By \cite[Thm.~4.5.5]{Pazy}, the $A$-admissibility of $H^{s-1}_\p$ entails that $\tilde A$, the part of $A$ in $H^{s-1}_\p$, generates the strongly continuous semigroup $\tilde T(t)$, which equals the restriction of $T(t)$ to $H^{s-1}_\p$. The assertion of the Lemma follows immediately.
\end{proof}	

A straight forward application of Lemma \ref{lem_SG2} shows that the operators $A(u)$, $u\in H^s_\p$, which are defined in \eqref{eq_normalized2}, satisfy the assumption \eqref{K1} of Kato's Theorem~\ref{katothm}. The following lemma establishes assumption \eqref{K2}.

\begin{lemma} \label{lem_A1}
The operator $A$ maps $H^s_\p$ into $\mathcal L(H^s_\p,H^{s-1}_\p)$. More precisely, the domain $D(A(u))$ contains $H^s_\p$, and the restriction $A(u)|_{H^s_\p}$ belongs to $\mathcal L(H^s_\p,H^{s-1}_\p)$ for any $u\in H^s_\p$. Furthermore, $A$ is Lipschitz continuous in the sense that for all $r>0$ there exists a constant $C_1$ which only depends on $r$ such that
\be \label{A_Lipschitz}
\| A(u) - A(v) \|_{\mathcal L(H^s_\p,H^{s-1}_\p)} \leq C_1 \, \|u-v\|_{s-1} 
\ee
for all $u,v$ inside $\mathrm B_r(0) \subseteq H^s_\p$.
\end{lemma}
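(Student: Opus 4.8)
The plan is to verify, one at a time, the three assertions packaged into assumption \eqref{K2}: that $H^s_\p$ lies in the domain of each $A(u)$, that $A(u)$ restricts to a bounded operator $H^s_\p \to H^{s-1}_\p$, and that the family $u \mapsto A(u)$ obeys the $X$-Lipschitz bound \eqref{A_Lipschitz}. The essential tool throughout is the product estimate of Lemma \ref{L_comm}\eqref{com_est_3}, together with the observation that for $s>3/2$ both spaces $H^s_\p$ and $H^{s-1}_\p$ are Banach algebras under pointwise multiplication, the latter because $s-1>1/2$.

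For the domain and boundedness, fix $u \in H^s_\p$. Since $a(u)=1+u+u^2$ is a polynomial in $u$ and $H^s_\p$ is an algebra, $a(u) \in H^s_\p$; hence for any $w \in H^s_\p$ the product $a(u)\, w$ again lies in $H^s_\p$, which by the description of the domain in Lemma \ref{lem_SG2} shows $w \in D(A(u))$, so $H^s_\p \subseteq D(A(u))$. For the norm bound I would apply Lemma \ref{L_comm}\eqref{com_est_3} with $r=t=s-1$ (admissible since $s-1>1/2$ and the borderline case $t=r$ is permitted) to obtain
\bew
\|A(u)w\|_{s-1} = \|a(u)\, w_x\|_{s-1} \leq C \, \|a(u)\|_{s-1} \, \|w_x\|_{s-1} \leq C \, \|a(u)\|_{s-1}\,\|w\|_s,
\eew
and then estimate $\|a(u)\|_{s-1} \leq 1 + \|u\|_{s-1} + \|u^2\|_{s-1} \leq C(1+\|u\|_{s-1}+\|u\|_{s-1}^2)$ once more by the product estimate. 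This yields $A(u)|_{H^s_\p} \in \mathcal L(H^s_\p, H^{s-1}_\p)$.

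For the Lipschitz property, the key is the algebraic factorization
\bew
a(u)-a(v) = (u-v) + (u^2-v^2) = (u-v)(1+u+v),
\eew
so that $(A(u)-A(v))w = (u-v)(1+u+v)\, w_x$ for $w \in H^s_\p$. I would apply Lemma \ref{L_comm}\eqref{com_est_3} twice with $r=t=s-1$, placing the difference $u-v$ in the lower-regularity ($H^t$) slot: first
\bew
\|(u-v)(1+u+v)w_x\|_{s-1} \leq C\,\|u-v\|_{s-1}\,\|(1+u+v)w_x\|_{s-1},
\eew
and then $\|(1+u+v)w_x\|_{s-1} \leq C\,(1+\|u\|_{s-1}+\|v\|_{s-1})\,\|w\|_s$. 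For $u,v \in \mathrm B_r(0)\subseteq H^s_\p$ one has $\|u\|_{s-1},\|v\|_{s-1} \leq \|u\|_s,\|v\|_s \leq r$, whence taking the supremum over $\|w\|_s \leq 1$ produces \eqref{A_Lipschitz} with a constant $C_1$ depending only on $r$.

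The main obstacle is that the Lipschitz bound \eqref{A_Lipschitz} must be controlled by the weaker norm $\|u-v\|_{s-1}$ of the base space $X$, not by $\|u-v\|_s$; a naive estimate would cost a full derivative and only deliver $\|u-v\|_s$. The factorization above is exactly what circumvents this: writing the difference of coefficients as $(u-v)$ times a factor that is itself bounded in $H^{s-1}_\p$ lets me assign $u-v$ to the $H^t=H^{s-1}$ slot of the product estimate. This works precisely because the borderline index $t=r=s-1>1/2$ is admitted in Lemma \ref{L_comm}\eqref{com_est_3}, i.e.~because $H^{s-1}_\p$ is an algebra, so the remaining factor $(1+u+v)w_x$ need only be measured in $H^{s-1}_\p$ rather than in a space of higher regularity.
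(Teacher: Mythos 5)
Your proposal is correct and follows essentially the same route as the paper: the domain inclusion and boundedness come from the algebra property of $H^s_\p$ and $H^{s-1}_\p$ (equivalently, the product estimate of Lemma \ref{L_comm}\eqref{com_est_3} with $t=r=s-1$), and the $X$-Lipschitz bound comes from writing $a(u)-a(v)$ as $(u-v)$ times an $H^{s-1}_\p$-bounded factor so that the difference only needs to be measured in $\|\cdot\|_{s-1}$. The paper estimates $\|u-v\|_{s-1}+\|u^2-v^2\|_{s-1}$ term by term rather than using your single factorization $(u-v)(1+u+v)$, but this is a cosmetic difference.
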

\begin{proof}
Recall that $H^s_\p$ is by definition contained in $D(A(u))$ for arbitrary $u\in H^s_\p$ (cf.~Lemma \ref{lem_SG2}).	
The restriction  $A(u)|_{H^s_\p}$ belongs to $\mathcal L(H^s_\p,H^{s-1}_\p)$, since $\partial_x \in \mathcal L(H^s_\p,H^{s-1}_\p)$ and $M_{a(u)}\in \mathcal L(H^{s-1}_\p)$.

To see that the required estimate is satisfied, let $u,v, w \in H_\p^s$ be arbitrary. %Since $H^{s-1}_\p$ is continuously embedded into $L^\infty$ we obtain that 
\begin{align*}
\|(A(u)-A(v))w\|_{s-1} &=\|(u - v +u^2  -v^2) \partial_x w\|_{s-1}\\
& \leq C \,(\|u-v\|_{s-1}+\|u^2-v^2\|_{s-1}) \, \|\partial_x w\|_{s-1} \\
&\leq C \, (1+\|u\|_{s-1}+\|v\|_{s-1})  \| w\|_s \,  \|u-v\|_{s-1}, 
\end{align*}
where $C$ denotes a generic constant. This shows that for every $r>0$ one can always find a constant $C_1$ such that \eqref{A_Lipschitz} holds uniformly for all $u,v \in \mathrm B_r(0) \subseteq H^s_\p$.
\end{proof}

The two lemmas below verify the assumptions \eqref{K3} and  \eqref{K4}, respectively; their proofs rely on Lemma~\ref{L_comm}.

\begin{lemma}  \label{lem_A2}
For any $u\in H^s_\p$ there exists a bounded linear operator $B(u) \in \mathcal L(H^{s-1}_\p)$ satisfying $B(u) = \Lambda A(u) \Lambda^{-1} - A(u)$, and $B \colon H^s_\p \to \mathcal L(H^{s-1}_\p)$ is uniformly bounded on bounded sets in $H^s_\p$. Furthermore, for all $r>0$ there exists a constant $C_2$, which depends only on $r$, such that  
\be \label{lem_B_estimate} 
\| B(u) - B(v)\|_{\mathcal L(H^{s-1}_\p)} \leq C_2 \, \|u-v\|_{H^s_\p}
\ee
for all $u,v \in \mathrm B_r(0)\subseteq H^s_\p$.
\end{lemma}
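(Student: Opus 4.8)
The plan is to reduce $B(u)$ to a commutator between $\Lambda$ and the multiplication operator $M_{a(u)}$, and then to control this commutator on $H^{s-1}_\p$ by means of the commutator estimate in Lemma~\ref{L_comm}\eqref{com_est_2}. First I would exploit that $\partial_x$ commutes with every power of $\Lambda$ (both being Fourier multipliers) to rewrite
\[
B(u) = \Lambda A(u) \Lambda^{-1} - A(u) = \big(\Lambda M_{a(u)} \Lambda^{-1} - M_{a(u)}\big)\partial_x = [\Lambda, M_{a(u)}]\,\Lambda^{-1}\partial_x,
\]
where $a(u) = 1 + u + u^2 \in H^s_\p$ because $H^s_\p$ is an algebra for $s>1/2$. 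This already displays $B(u)$ as an operator of order zero: $\Lambda^{-1}\partial_x$ is a bounded Fourier multiplier on every $H^{t}_\p$, and the commutator $[\Lambda, M_{a(u)}]$ regains the derivative lost to $\Lambda$.

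To estimate $\|B(u)w\|_{s-1} = \|\Lambda^{s-1}B(u)w\|_0$ I would invoke the algebraic identity
\[
\Lambda^{s-1}[\Lambda, M_{a}] = [\Lambda^{s}, M_{a}] - [\Lambda^{s-1}, M_{a}]\Lambda,
\]
which follows by expanding both sides, so that with $h \coloneqq \Lambda^{-1}\partial_x w$ (satisfying $\|h\|_{s-1}\le\|w\|_{s-1}$) one obtains
\[
\Lambda^{s-1}B(u)w = [\Lambda^{s}, M_{a(u)}]h - [\Lambda^{s-1}, M_{a(u)}]\,\partial_x w.
\]
Each term is then handled by Lemma~\ref{L_comm}\eqref{com_est_2} with $r = s-1$: for the first term I take $t=s$, giving $\|[\Lambda^{s}, M_{a(u)}]h\|_0 \le C\|a(u)\|_s\|h\|_{s-1}$; for the second I take $t = s-1$, giving $\|[\Lambda^{s-1}, M_{a(u)}]\partial_x w\|_0 \le C\|a(u)\|_s\|\partial_x w\|_{s-2}\le C\|a(u)\|_s\|w\|_{s-1}$. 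Summing yields $\|B(u)\|_{\mathcal L(H^{s-1}_\p)}\le C\|a(u)\|_s$, and since $\|a(u)\|_s\le 1+\|u\|_s+C\|u\|_s^2$ by the algebra property, $B$ is uniformly bounded on bounded sets of $H^s_\p$.

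For the Lipschitz estimate I would use that $B$ is affine-linear in $a$, so $B(u)-B(v) = [\Lambda, M_{a(u)-a(v)}]\Lambda^{-1}\partial_x$ with $a(u)-a(v) = (u-v)(1+u+v)$. Repeating the estimate above verbatim with $a(u)-a(v)$ in place of $a(u)$ gives $\|B(u)-B(v)\|_{\mathcal L(H^{s-1}_\p)} \le C\|a(u)-a(v)\|_s$, and the algebra property bounds $\|a(u)-a(v)\|_s \le C(1+\|u\|_s+\|v\|_s)\|u-v\|_s$, which is $\le C_2\|u-v\|_s$ for $u,v\in\mathrm B_r(0)\subseteq H^s_\p$, as required by \eqref{lem_B_estimate}.

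I expect the only genuine subtlety to be checking that both applications of the commutator estimate fall within its admissible range: with $r=s-1$ the hypothesis $r>1/2$ is precisely $s>3/2$, and the constraint $-1/2<t\le r+1=s$ holds with equality for $t=s$ and comfortably for $t=s-1$. This is exactly where the regularity threshold $s>3/2$ enters the argument, and it is the one place where the parameters must be tracked carefully; the remaining steps are routine multiplier and algebra estimates.
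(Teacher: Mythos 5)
Your proposal is correct and follows essentially the same route as the paper's proof: the same reduction $B(u)=[\Lambda,M_{a(u)}]\Lambda^{-1}\partial_x$, the same commutator identity for $\Lambda^{s-1}B(u)$ (the paper writes it as $[\Lambda^{s},M_{a(u)}]\Lambda^{-1}\partial_x+[M_{a(u)},\Lambda^{s-1}]\partial_x$, which is your decomposition up to a sign convention on the second commutator), and the same two applications of Lemma~\ref{L_comm}\eqref{com_est_2} combined with the algebra property of $H^s_\p$ to obtain the Lipschitz bound via $a(u)-a(v)=(u-v)(1+u+v)$. Your explicit verification of the admissible parameter range $t\in\{s,s-1\}$ with $r=s-1$ is a welcome addition that the paper leaves implicit.
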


\begin{proof}
Let $u\in H^s_\p$. Since $\partial_x$ commutes with $\Lambda$ and $\Lambda^{-1}$ we obtain that
\bew
B(u)=\Lambda M_{a(u)}\partial_x\Lambda^{-1}-M_{a(u)}\partial_x
%=[\Lambda,M_{a(u)}\partial_x]\Lambda^{-1}
=[\Lambda,M_{a(u)}]\Lambda^{-1}\partial_x.
\eew
Hence we can write $\Lambda^{s-1} B(u)$ as
\begin{align*}
\Lambda^{s-1} B(u) %&= \Lambda^{s-1}[\Lambda,M_{a(u)}]\Lambda^{-1}\partial_x \\
%&=\Lambda^{s}M_{a(u)}\Lambda^{-1}\partial_x-\Lambda^{s-1}M_{a(u)}\partial_x\\
%&=[\Lambda^{s},M_{a(u)}]\Lambda^{-1}\partial_x+M_{a(u)}\Lambda^{s-1}\partial_x-\Lambda^{s-1}M_{a(u)}\partial_x\\
&=[\Lambda^{s},M_{a(u)}]\Lambda^{-1}\partial_x + [M_{a(u)},\Lambda^{s-1}]\partial_x.
\end{align*}
Let now $w\in H^{s-1}_\p$ and $u,v \in H^s_\p$ be arbitrary. In view of the above identity and Lemma \ref{L_comm} \eqref{com_est_2} we obtain the following estimate
\begin{align*}
\big\|\big(B(u)-B(v)\big)w\big\|_{s-1} &= \big\|\Lambda^{s-1}\big(B(u)-B(v)\big) w\big\|_0 \\
&\leq \big\|[\Lambda^{s},M_{a(u)-a(v)}]\Lambda^{-1}\partial_x w \big\|_0
+ \big\|[\Lambda^{s-1},M_{a(u)-a(v)}]\partial_x w \big\|_0\\
&\leq C \|a(u)-a(v)\|_s \big( \big\|\Lambda^{-1}\partial_x w\big\|_{s-1}+\big\|\partial_x w\big\|_{s-2}\big)\\	
&\leq C \big(\|u-v\|_s+\|u^2-v^2\|_s \big)\|w\|_{s-1},
\end{align*}
where $C$ is a generic constant independent of $u, v$ and $w$. In particular, this shows that $B(u)$ extends to a bounded linear operator on $H^{s-1}_\p$ for every $u\in H^s_\p$ such that $B\colon H^s_\p \to \mathcal L(H^{s-1}_\p)$ is uniformly bounded on bounded sets in $H^s_\p$. Furthermore, this estimation proves that there exists a constant $C_2$ depending only on the radius of the ball $\mathrm B_r(0) \subseteq H^s_\p$ such that \eqref{lem_B_estimate} is satisfied for all $u,v\in B_r(0)$. 
\end{proof}

\begin{lemma} \label{Lips}
The map $f\colon H^s_\p \to H^s_\p$ is locally $H^{s-1}_\p$-Lipschitz continuous in the sense that for every $r>0$ there exists a constant $C_3>0$, depending only on $r$, such that
\be \label{X-Lipschitz}
\| f(u) - f(v)\|_{s-1} \leq C_3 \, \|u - v\|_{s-1} \quad \text{for all} \; u,v \in \mathrm B_r(0) \subseteq H^s_\p,
\ee
and locally $H^s_\p$-Lipschitz continuous in the sense that for every $r>0$ there exists a constant $C_4>0$, depending only on $r$, such that
\bew %\label{Y-Lipschitz}
\| f(u) - f(v)\|_s \leq C_4 \, \|u - v\|_s \quad \text{for all} \; u,v \in \mathrm B_r(0) \subseteq H^s_\p.
\eew
\end{lemma}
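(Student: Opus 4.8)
The plan is to exploit that the map $f$ defined in \eqref{eq_normalized3} factors as $f = \Lambda^{-2}\partial_x \circ P$, where
\[
P(u) \coloneqq u + u^2 + u^3 + u^2_x + u\,u^2_x .
\]
The operator $\Lambda^{-2}\partial_x$ is a Fourier multiplier of order $-1$ (its symbol $-\I n(1+n^2)^{-1}$ decays like $1/n$), so it maps $H^\sigma_\p$ boundedly into $H^{\sigma+1}_\p$ for every $\sigma\in\R$. Consequently
\[
\|f(u)-f(v)\|_{s-1}\le C\,\|P(u)-P(v)\|_{s-2},
\qquad
\|f(u)-f(v)\|_{s}\le C\,\|P(u)-P(v)\|_{s-1},
\]
and both assertions reduce to estimating $\|P(u)-P(v)\|_t$ for $t=s-2$ and $t=s-1$, against $\|u-v\|_{s-1}$ and $\|u-v\|_s$, respectively.

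Next I would estimate $P(u)-P(v)$ term by term after factoring out a single difference. Write $u^2-v^2=(u+v)(u-v)$, $u^3-v^3=(u^2+uv+v^2)(u-v)$, $u^2_x-v^2_x=(u_x+v_x)(u_x-v_x)$, and for the cubic term
\[
u\,u^2_x-v\,v^2_x = u(u_x+v_x)(u_x-v_x)+(u-v)\,v^2_x .
\]
To each product I apply the product estimate Lemma~\ref{L_comm}\eqref{com_est_3} with $r=s-1>1/2$, placing the ``coefficient'' factor (one of $u+v$, $u^2+uv+v^2$, $u(u_x+v_x)$, $v^2_x$) into $H^{s-1}_\p$ and the remaining factor ($u-v$ or $u_x-v_x$) into $H^t_\p$. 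Since $H^{s-1}_\p$ is closed under multiplication, each coefficient factor lies in $H^{s-1}_\p$ with norm bounded by a constant depending only on $r$ whenever $u,v\in\mathrm B_r(0)\subseteq H^s_\p$; the remaining difference factor is controlled by $\|u-v\|_{s-1}$ (resp.\ $\|u-v\|_s$) using $\partial_x\in\mathcal L(H^{s-1}_\p,H^{s-2}_\p)$ (resp.\ $\partial_x\in\mathcal L(H^s_\p,H^{s-1}_\p)$). Summing the five contributions and absorbing the coefficient norms into a single $r$-dependent constant produces $C_3$ and $C_4$.

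The main obstacle is to ensure the product estimate is applicable at the lowest regularity $t=s-2$, which is needed only for the first ($H^{s-1}_\p$-Lipschitz) bound. Its hypothesis $-r<t\le r$ with $r=s-1$ reads $-(s-1)<s-2\le s-1$, and the left inequality is precisely $s>3/2$; thus the threshold in the statement is exactly what makes this step work, whereas the second bound only uses the algebra property at $t=r=s-1$ (valid for $s>1$). The most delicate term is $u\,u^2_x$: the splitting above is what isolates a single factor $u-v$ while keeping all other factors in the algebra $H^{s-1}_\p$, and it is the one term forcing us to control a product of three $H^{s-1}_\p$ functions, so the multiplicative structure of $H^{s-1}_\p$ is invoked twice there.
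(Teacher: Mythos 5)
Your proposal is correct and follows essentially the same route as the paper: both reduce the claim via the smoothing property of $\Lambda^{-2}\partial_x$ to estimating the polynomial differences in $H^{s-2}_\p$ (resp.\ $H^{s-1}_\p$), factor out $u-v$ term by term (with the same splitting of $u\,u^2_x-v\,v^2_x$ up to swapping the roles of $u$ and $v$), and invoke the product estimate of Lemma~\ref{L_comm}\eqref{com_est_3} with $r=s-1$, $t=s-2$. Your explicit remark that the hypothesis $-(s-1)<s-2$ is exactly the threshold $s>3/2$ is a nice touch the paper leaves implicit.
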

\begin{proof}
Recall that 
\bew
 f(u) =  \Lambda^{-2} \partial_x \big[ 
 u +  u^2 
+  u^3+  u^2_x + u u^2_x \big].
\eew
Let $u\in H^s_\p$, then $u_x$ is contained in $H^{s-1}_\p$, which is closed under multiplication since  $s>3/2$, hence  polynomials in $u$ and $u_x$ lie again in $H^{s-1}_\p$. Since $\Lambda^{-2} \partial_x$ maps $H^{r}_\p$ continuously to $H^{r+1}_\p$ for arbitrary $r\in\R$, it is clear that $f$ is well-defined and continuous as a mapping from $H^s_\p$ to $ H^s_\p$. An application of  Lemma \ref{L_comm} \eqref{com_est_3} yields the following estimation for arbitrary $u,v \in H^s_\p$: 
\begin{align*} \label{f_X_Lip_1}
\| f(u) - f(v)\|_{s-1} 
&\leq C \,   \big\| u-v + u^2-v^2 +  u^3-v^3 +  u_x^2-v_x^2 +  uu_x^2-vv_x^2 \big\|_{s-2}\\
& \leq C\, \Big( \big( 1 + \|u+v\|_{s-1}+ \|u^2+uv+v^2\|_{s-1} \big)  \| u-v\|_{s-1} \\
& \quad \qquad+ \| u_x+v_x\|_{s-1} \|u_x-v_x\|_{s-2}  +  \| u_x\|^2_{s-1}  \|u-v\|_{s-1} \\
& \quad \qquad+ \|v(u_x+v_x)\|_{s-1} \|u_x-v_x\|_{s-2} \Big), 
\end{align*}
where $C$ denotes a generic constant being independent of $u$ and $v$.
This shows that for every $r>0$ there exists a constant $C_3>0$ such that \eqref{X-Lipschitz} holds for all $u,v \in  \mathrm B_r(0)  \subseteq H^s_\p$.

Similarly, elementary estimates (which do not rely on the product inequality in Lemma \ref{L_comm}) yield that $f$ is locally $H^s_\p$-Lipschitz continuous; we omit the details.
\end{proof}

It is now an immediate consequence of Theorem \ref{katothm} in conjunction with Lemmas \ref{lem_SG2}, \ref{lem_A1}, \ref{lem_A2} and \ref{Lips}, that the periodic Cauchy problem for equation \eqref{eq_normalized1} with $A$ and $f$ given by  \eqref{eq_normalized2} and  \eqref{eq_normalized3}, respectively, is locally well-posed for initial data in $H^s_\p$, $s> 3/2$, such that the unique local solution $u$ with maximal life span $T\in(0,\infty]$ lies in $\mathcal C ([0,T);H^s_\p) \cap \mathcal C^1 ((0,T);H^{s-1}_\p)$ and depends continuously on the initial datum. As noted earlier, the particular values of the coefficients in \eqref{eq_normalized2} and  \eqref{eq_normalized3} do not play a role when it comes to \emph{local} well-posedness. Therefore the periodic Cauchy problem for equation \eqref{maineq} is locally well-posed as well (in exactly the same sense), which finishes the proof of Theorem \ref{wpresult}.

\begin{remark} \label{Rem_final}
Our proof of Theorem \ref{wpresult} can be directly adapted to the corresponding non-periodic initial value problem on the real line. This is not surprising, since Sobolev's embedding theorem, as well as product inequalities and commutator estimates possess analogous versions for the periodic and non-periodic case.
Note that we carried over the crucial step in the proof of Lemma \ref{lem_SG} to the corresponding non-periodic scenario by means of a smooth partition of unity; in particular one immediately obtains an analogous non-periodic version of this lemma with the same proof.  

One delicate aspect of the proof is---apart from the application of the nontrivial commutator estimates in Lemma \ref{L_comm}---the approximation technique in the second step in the proof of Lemma \ref{lem_SG}, which is inspired by the arguments in \cite{CE2}.
The domain $D(A_0)$ of the operator $A_0$ in the proof of Lemma \ref{lem_SG} contains $H^1_\p$ as a proper subspace, i.e.~an element $v\in D(A_0)$ does in general \emph{not} lie in $H^1_\p$. Consequently the integration by parts formula is not directly applicable to the $L^2_\p$ inner product $(A_0 w,v)_0$  in \eqref{ibp}.

As noted in the introduction, the local well-posedness problem for \eqref{maineq} has been earlier studied in \cite{YangXu} in the analogous framework on the real line, i.e.~$X=H^{s-1}(\R)$ and $Y=H^s(\R)$. The authors chose basically the same approach, i.e.~Kato's theory was applied to the nonlocal first order reformulation of \eqref{maineq}.
A notable weakness of this paper is the presentation of the proofs: e.g.~the integration by parts formula is applied under heavy abuse of notation in an unjustified way in the proof of \cite[Lemma 3.1]{YangXu}, the proof of \cite[Lemma 3.3]{YangXu} is incomplete (only the trivial local $H^s$-Lipschitz estimate is shown, the less trivial $H^{s-1}$-estimation is omitted), nontrivial commutator estimates are applied without providing any references---to mention just a few flaws. 
\end{remark}

Let us finally comment on some open questions regarding the shallow water wave equation \eqref{maineq}.
It is up to this point unknown whether or not a solution of \eqref{maineq} remains bounded in $L^\infty$ on its whole interval of existence for the full space of initial data. This would give an answer to the important question whether or not wave-breaking is the only possible blow-up scenario (i.e.~$\|u(t)\|_{L^\infty(\R)}$ remains bounded, whereas $\limsup_{t \nearrow T} \|u_x(t)\|_{L^\infty(\R)}=\infty$ whenever the maximal life span $T$ is finite). Moreover, it is an open question if blow-up---or more specifically wave-breaking---of solutions actually occurs for certain specified classes of initial data; neither it is known---apart from the traveling wave solutions established in~\cite{GQ18}---whether there exist nontrivial initial wave profiles giving rise to global solutions.  

%%% ----------------------------------------------------------------------

\vspace{2em}
\noindent
\textbf{Acknowledgments}\\
\noindent
The authors acknowledge the support of the Erwin Schr\"odinger International Institute for Mathematics and Physics (ESI) during the program ``Mathematical Aspects of Physical Oceanography''.
R.~Quirchmayr acknowledges the support of the European Research Council, Consolidator Grant No.~682537.
%

%%% ----------------------------------------------------------------------

% ------------------------------------------------------------------------

\begin{thebibliography}{00}
%
\bibitem{BBM}
T.~B.~Benjamin, J.~L.~Bona and J.~J.~Mahoney,
Model equations for long waves in nonlinear dispersive systems,
\textit{Phil. Trans. R. Soc. Lond. Ser. A Math. Phys. Eng. Sci.} \textbf{227}, 47--78 (1972).
%
\bibitem{Bouss1877}
J.~Boussinesq,
\textit{Essai sur la theorie des eaux courantes}, 
M\'emoires pr\'esent\'es par divers
savants \`a l'Acad. des Sci. Inst. Nat. France, XXIII (1877).
%
\bibitem{CH}
R.~Camassa and D.~Holm, 
An integrable shallow water equation with peaked solitons,
\textit{Phys. Rev. Lett.} \textbf{71},  1661--1664 (1993).
%
\bibitem{Constantin}
A.~Constantin,
\textit{Nonlinear Water Waves with Applications to Wave-Current Interactions and Tsunamis}, 
CBMS-NSF Regional Conference Series in Applied Mathematics \textbf{81}, SIAM, Philadelphia (2011).
%
\bibitem{CE1}
A.~Constantin and J.~Escher,
Global Existence and Blow-up for a Shallow Water Equation, 
\textit{Ann. Scuola Norm. Sup. Pisa Cl. Sci.}  \textbf{26}(4), 303--328 (1998).
%
\bibitem{CE2}
A.~Constantin and J.~Escher,
Well-Posedness, Global Existence, and Blowup Phenomena for a Periodic Quasi-Linear Hyperbolic Equation,
\textit{Comm. Pure Appl. Math.} \textbf{51}, 475--504 (1998).
%
\bibitem{CE3}
A.~Constantin and J.~Escher,
Wave breaking for nonlinear nonlocal shallow water equations,
\textit{Acta Math.} \textbf{181}, 229--243 (1998).
%
\bibitem{CIL}
A.~Constantin, R.~I.~Ivanov and J.~Lenells,
Inverse scattering transform for the Degasperis-Procesi equation,
\textit{Nonlinearity} \textbf{23}, 2559--2575 (2010).
%
\bibitem{CJ1}
A.~Constantin and R.~S.~Johnson,
On the Non-Dimensionalisation, Scaling and Resulting Interpretation of the Classical Governing Equations for Water Waves,
\textit{J. Nonlinear Math. Phys.}
\textbf{15},
58--73 (2008).
%
\bibitem{CL}
A.~Constantin and D.~Lannes,
The Hydrodynamical Relevance of the Camassa-Holm and Degasperis-Procesi Equations,
\textit{Arch. Rational Mech. Anal.}
\textbf{192},
165--186 (2009).
%
\bibitem{CMcK}
A.~Constantin and H.~P.~McKean,
A shallow water equation on the circle,
\textit{Comm. Pure Appl. Math.} \textbf{52}, 949--982 (1999).
%
\bibitem{DHH}
A.~Degasperis, D.~D.~Holm and A.~N.~W.~Hone,
A new integrable equation with peakon solutions,
\textit{Theor. Math. Phys.} \textbf{133}, 1463--1474 (2002).
%
\bibitem{DJ}
P.~G.~Drazin and R.~S.~Johnson, 
\textit{Solitons: an introduction}, 
Cambridge Univ. Press, Cambridge, 1990.
%
\bibitem{DP}
A.~Degasperis and M.~Procesi,
\textit{Asymptotic integrability} 
In: Degasperis, A. and Gaeta, G. (eds.) Symmetry and Perturbation Theory, pp. 23--37. World Scientific, Singapore (1999).
%
\bibitem{Mut2}
N.~Duruk Mutluba\c{s},
On the Cauchy problem for a model equation for shallow water waves of moderate amplitude,
\textit{Nonlinear Anal. Real World Appl.} \textbf{14}(5), 2022--2026 (2013).
%
\bibitem{Mut}
N.~Duruk Mutluba\c{s},
Local well-posedness and wave breaking results for periodic solutions of a shallow water equation for waves of moderate amplitude
\textit{Nonlinear Anal. Theory Methods Appl.} \textbf{97}, 145--154 (2014).
%
\bibitem{DG}
N.~Duruk Mutluba\c{s} and A.~Geyer,
Orbital stability of solitary waves of moderate amplitude in shallow water, 
\textit{J. Differential Equations} \textbf{255} (2), 254-263 (2013).
%
\bibitem{DGM}
N.~Duruk Mutluba\c{s}, A.~Geyer and B.~Matioc, 
Non-uniform continuity of the flow map for an evolution equation modeling shallow water waves of moderate amplitude, 
\textit{Nonlinear Anal. Real World Appl.} \textbf{17}(6), 322--331 (2014).
%
\bibitem{ELY}
J.~Escher, Y.~Liu and D.~Yin,
Global weak solutions and blow-up structure for the Degasperis-Procesi equation
\textit{J. Funct. Analysis} \textbf{192}, 457--485 (2006).
%
\bibitem{FanYan}
L.~Fan and W.~Yan,
The Cauchy problem for shallow water waves of large amplitude in Besov space,
\textit{J. Differ. Equ.} \textbf{267}(3), 1705--1730 (2019).
%
\bibitem{FF}
B.~Fuchssteiner and A.~S.~Fokas, 
Symplectic structures, their B\"acklund transformations and hereditary symmetries,
\textit{Physica D} \textbf{4}, 47--66 (1981).
%
\bibitem{GG}
A.~Gasull and A.~Geyer,
Traveling surface waves of moderate amplitude in shallow water,
\textit{Nonlinear Anal. Theory Methods Appl.} \textbf{102}, 105-119 (2014).
%
\bibitem{AG}
A.~Geyer,
Solitary traveling waves of moderate amplitude,
\textit{J. Nonlinear Math. Phys.} \textbf{19}, 104--115 (2012).
%
\bibitem{AG2}
A.~Geyer, 
Symmetric waves are traveling waves for a shallow water equation modeling surface waves of moderate amplitude, 
\textit{J. Nonlinear Math. Phys.} \textbf{22}(4), 545--551 (2015). 
%
\bibitem{GQ18}
A.~Geyer and R.~Quirchmayr, 
Traveling wave solutions of a highly nonlinear shallow water equation, 
\textit{Discrete Contin. Dyn. Syst.} \textbf{38}, 1567--1604 (2018).
%
\bibitem{I1}
R.~I.~Ivanov,
On the Integrability of a Class of Nonlinear Dispersive Wave Equations
\textit{J. Nonlinear Math. Phys.} \textbf{12}, 462--468 (2005).
%
\bibitem{I2}
R.~I.~Ivanov,
Water waves and integrability,
\textit{Philos. Trans. Roy. Soc. London A} \textbf{365}, 2267--2280 (2007).
%
\bibitem{Johns1}
R.~S.~Johnson, 
Camassa-Holm, Korteweg-de Vries and related models for water waves, 
\textit{J. Fluid Mech.} \textbf{455}, 63--82 (2002).
%
\bibitem{Johnson}
R.~S.~Johnson,
\textit{A Modern Introduction to the Mathematical Theory of Water Waves} 
Cambridge Univ. Press, Cambridge (1997).
%
\bibitem{KappelerPoeschel03}
T.~Kappeler and J.~P\"oschel,
\newblock{\em KdV \& KAM}, 
\newblock{Ergeb.~der Math.~und ihrer Grenzgeb.}, Springer, Berlin-Heidelberg-New York, 2003.
%
\bibitem{Kato}
T.~Kato
\textit{Quasi-linear equations of evolution, with applications to partial differential equations} 
In: Spectral Theory and Differential Equations, pp. 25-70. Springer Lecture Notes in Mathematics \textbf{448}, Berlin (1975).
%
\bibitem{KdV1895}
D.~J.~Korteweg and G.~de~Vries.
\newblock{On the change of form of long waves advancing in a rectangular canal and on a new type of long stationary waves}.
\newblock {\em Phil. Mag.} \textbf{39} 422--443 (1895).
%
\bibitem{Lannes}
D.~Lannes, 
\textit{The Water Waves Problem: Mathematical Analysis and Asymptotics}, 
American Math. Soc., Providence, RI (2013).
%
\bibitem{Lenells2005a} %(MR2168830) [10.1016/j.jde.2004.09.007]
\newblock J.~Lenells,
\newblock{Traveling wave solutions of the Camassa-Holm equation},
\newblock \emph{J.~Differ.~Equ.} \textbf{217}, 393--430  (2005).
%
\bibitem{Lenells2005b} %(MR2132889) [10.1016/j.jmaa.2004.11.038]
\newblock J.~Lenells,
\newblock{Traveling wave solutions of the Degasperis-Procesi equation},
\newblock \emph{J.~Math.~Anal.~Appl.} \textbf{306}, 72--82  (2005).
%
\bibitem{Pazy}
A.~Pazy, 
\textit{Semigroups of Linear Operators and Applications to Partial Differential Equations} 
Springer, New York (1983).
%
\bibitem{Quirchmayr2016} 
\newblock R.~Quirchmayr,
\newblock{A new highly nonlinear shallow water wave equation},
\newblock \emph{J.~Evol.~Equations} \textbf{16}, 539--567  (2016).
%
\bibitem{Russell1839}
J.~S.~Russell,
Experimental researches into the laws of certain hydrodynamical phenomena that accompany the motion of floating bodies, and have not previously been reduced into conformity with the known laws of the resistance of fluids,
\textit{Trans.~R.~Soc.~Edinburgh} \textbf{14}, 47--109 (1839).
%
\bibitem{T}
T.~Tao, 
\textit{Low-regularity global solutions to nonlinear dispersive equations} 
In: Surveys in Analysis and Operator Theory, pp. 19--48, Proc. Centre Math. Appl. Austral. Nat. Univ. \textbf{40} (2002).
%
\bibitem{YangXu}
S.~Yang and T.~Xu,
Well-posedness and persistence property for a shallow water wave equation for waves of large amplitude,
\textit{Appl. Anal.} \textbf{98}(5), 981--990 (2017).
%
\bibitem{Zhou}
S.~Zhou,
Well-posedness and wave breaking for a shallow water wave model with large amplitude,
\textit{J. Evol. Equ.}, \textbf{20}(1), 141--163 (2020).
\end{thebibliography}
\end{document}